\documentclass[11pt,letterpaper]{amsart}

\usepackage[T1]{fontenc}
\usepackage[utf8]{inputenc}
\usepackage{lmodern}
\usepackage{microtype}
\usepackage{amsmath,amssymb,amsthm,mathtools,mathrsfs}
\usepackage{xcolor}
\usepackage[margin=1.08in]{geometry}
\usepackage{enumitem}
\usepackage{booktabs,tabularx}
\usepackage{tikz-cd}
\usepackage{refcount}
\usepackage[
  backend=biber,
  style=alphabetic,
  sorting=nyt,
  giveninits=true,
  maxbibnames=99,
  maxalphanames=5,
  doi=true,
  isbn=false,
  url=false,
  eprint=true
]{biblatex}
\usepackage[
  colorlinks=true,
  linkcolor=magenta,
  citecolor=cyan,
  urlcolor=magenta
]{hyperref}
\hypersetup{
  pdftitle={Automorphic commutator relations in relative geometric Langlands},
  pdfauthor={Shurui Liu},
  pdfsubject={relative Langlands program, automorphic commutator relation},
  pdfkeywords={relative Langlands program, automorphic commutator relation,
    period sheaves, Hecke correspondences, geometric Satake equivalence,
    fusion, formal multidisks}
}

\setlist[enumerate]{leftmargin=2.2em,itemsep=2pt,topsep=4pt}
\setlist[itemize]{leftmargin=2em,itemsep=2pt,topsep=4pt}
\allowdisplaybreaks
\numberwithin{equation}{section}
\theoremstyle{plain}
\newtheorem{theorem}{Theorem}[section]
\newtheorem{proposition}[theorem]{Proposition}
\newtheorem{lemma}[theorem]{Lemma}

\newtheorem{maintheorem}{Main Theorem}

\newtheorem{maintheoremB}{Main Theorem}

\newtheorem{assumption}[theorem]{Assumption}
\theoremstyle{definition}

\theoremstyle{remark}
\newtheorem{remark}[theorem]{Remark}
\DeclareMathOperator{\Aut}{Aut}
\DeclareMathOperator{\Bun}{Bun}
\DeclareMathOperator{\Corr}{Corr}
\DeclareMathOperator{\Hom}{Hom}
\DeclareMathOperator{\IC}{IC}
\DeclareMathOperator{\PL}{PL}

\newcommand{\boxstar}{\makebox[0pt][l]{$\square$}\raisebox{.115ex}{\hspace{0.09em}\Large$\star$}\ }

\title{Automorphic commutator relations \\ in relative geometric Langlands}
\author{Shurui Liu}
\address{Stanford University}
\email{srliu@stanford.edu}
\date{}
\subjclass[2020]{Primary 14D24; Secondary 11F67, 14H60, 22E57}
\keywords{Relative Langlands program, automorphic commutator relation, period sheaves, Hecke correspondences, geometric Satake equivalence, fusion, formal multidisks}

\begin{document}

\begin{abstract}
We prove the automorphic commutator relation conjectured by Liu and Wang, which is the geometric input for the automorphic Clifford relations in their framework relating higher period integrals to higher derivatives of $L$-functions over function fields. The relation was previously established only under additional cohomological vanishing assumptions and for curves of genus different from one; we remove these assumptions. The main new ingredient is a ``universal-local'' stage between the local and the semi-local settings: we introduce the classifying stack of formal multidisks with two ordered marked sections, and we construct universal-local analogues of the Hecke stacks, unit objects, Hecke actions and special cohomological correspondences. Fusion and purity on this stack yield a universal-local commutator identity, which is proved using only the local assumptions on the Plancherel algebra that enter the conjecture. Specialization to a curve and the six-functor formalism then give the conjecture for every smooth projective curve.
\end{abstract}

\maketitle

\tableofcontents

\section{Introduction}
\label{sec:introduction}
\subsection{Background and motivation}
\subsubsection{Special cycles and higher derivatives of \(L\)-functions}
\label{special} 
$L$-functions are of central importance in number theory and Langlands program.
Over function fields, higher derivatives of \(L\)-functions are governed by special cycles on moduli stacks of shtukas. This phenomenon was discovered by Yun and Zhang in their work on the higher Gross--Zagier formula \cite{yun2017shtukas,yun2019shtukas}: intersection numbers of Heegner--Drinfeld cycles on the moduli stack of shtukas with \(r\) legs compute the \(r\)-th central derivative of the relevant \(L\)-function. Several further instances are now known. The higher Siegel--Weil formula of Feng, Yun and Zhang \cite{feng2024higher} identifies degrees of special cycles on moduli stacks of unitary shtukas with higher central derivatives of the non-singular Fourier coefficients of Siegel--Eisenstein series, and Feng, Howard and Mkrtchyan \cite{feng2025higher} extended it to the coefficients of corank one. The arithmetic volume formulas of Feng, Yun and Zhang \cite{feng2026arithmetic} relate tautological classes on moduli stacks of shtukas to higher derivatives of Artin \(L\)-functions.

The same special-cycle classes appear as the coefficients of the higher theta series introduced in \cite{feng2021higher}, which places their intersection theory within the Kudla program; the modularity results of \cite{feng2023modularity,feng2024modularity,feng2026modularity} lay the foundation for a higher arithmetic theta lift.

\subsubsection{Relative Langlands duality}
The relative Langlands duality of Ben-Zvi, Sakellaridis and Venkatesh \cite{benzvi2024relativelanglandsduality} provides a conjectural framework in which automorphic periods are related to \(L\)-functions through a duality between Hamiltonian spaces for a reductive group and for its dual group. Liu and Wang \cite{LW} and Wang \cite{wang2025specialcycleshtukascategorical} extended this framework, in the strongly tempered setting, to higher derivatives and to intersection formulas on moduli stacks of shtukas. Their approach is sheaf-theoretic and rests on trace constructions, and its central observation is that higher-derivative formulas follow from automorphic Clifford relations satisfied by certain Hecke operators.

This framework gives a conceptual and uniform explanation of the results recalled in \S\ref{special}. In particular, \cite{LW,wang2025specialcycleshtukascategorical} establish intersection formulas for higher derivatives of Rankin--Selberg \(L\)-functions. Subsequent developments include the diagonal-cycle formulas for adjoint \(L\)-functions of Wang \cite{wang2026diagonalcyclesshtukasadjoint} and the arithmetic volume formulas of Wang and Wei \cite{wang2026arithmetic} for arbitrary coweights of split semisimple groups, which extend the results of \cite{feng2026arithmetic} and \cite{feng2026eigenweights}.

\subsubsection{Automorphic commutator relations}
The key geometric input for the automorphic Clifford relations is the automorphic commutator relation, formulated in \cite[Conjecture~4.45]{LW}. It was proved in \cite[Theorem~4.47]{LW} under the additional Assumption~4.46 of \textit{loc.\ cit.}, which requires three auxiliary cohomological vanishing statements and that the curve \(C\) have genus \(g(C)\neq 1\). The main result of this paper, Main Theorem~\ref{thm:main}, establishes the conjecture without any of these additional conditions.

As a consequence, the commutator identities used in the applications above now have a uniform proof. Specializing Main Theorem~\ref{thm:main} to \(X=\mathrm{pt}\) with one representation trivial recovers \cite[Theorems~3.2 and~3.8]{wang2026arithmetic}, the key ingredients in the proof of the main theorem \cite[Theorem~2.13]{wang2026arithmetic}, which had to be proved separately because the hypotheses of \cite[Theorem~4.47]{LW} are not satisfied there. It also removes the genus restriction from the global commutator identity of \cite[Proposition~2.6]{wang2026diagonalcyclesshtukasadjoint}.

\subsection{Notations and Settings}\label{notations1}
We use the conventions of \cite[\S2 and \S4]{LW}.
\subsubsection{Group}
Let \(G\) be a connected split reductive group over \(\mathbb F_q\). 
\subsubsection{$G$-variety}
Let $X=\mathrm{Spec} A$ be an affine smooth $G$-variety together with
a closed embedding $X \subset V$, where $V$ is a finite-dimensional representation of $G$ regarded as a $G$-variety. 
\subsubsection{Curve}
Let \(C\)
be a smooth projective geometrically connected curve over \(\mathbb F_q\).
\subsubsection{Coefficient field}
Let \(k\) be the characteristic-zero \(\ell\)-adic coefficient field,
where \(\ell\nmid q\), and let \(\check G\) be the Langlands dual group
over \(k\). 
\subsubsection{Shift}
For an integer $n$, we use the shift convention \(\langle n\rangle=[n](n)\) for cohomological shift with half Tate twist.
\subsubsection{Period sheaf}
Let $\mathcal P_X=\pi_!k_{\Bun_G^X}$ be the period sheaf, where
$\pi:\Bun_G^X\to\Bun_G$, and \(\Bun_G^X\) classifies a \(G\)-bundle on \(C\) together with a section of its associated \(X\)-bundle.
\subsubsection{Representations} We use $I=[r]$ to denote an ordered set of cardinality $|I|=r$. Let $V^i$ be a finite-dimensional representation of \(\check G\) for $i\in[r]$. We set \(V^I=\boxtimes_{i\in I}V^i\).
\subsubsection{Global Hecke stack and Hecke action}
We denote by $\mathrm{Hk}^{\text{glob}}_{G,I}$ the global Hecke stack, whose $S$-points are
\begin{multline*}
\mathrm{Hk}^{\text{glob}}_{G,I}(S)=\bigl\{(c_I,\mathscr{E}_0,\mathscr{E}_1,\phi)\bigm|
c_I\in C^I(S),\ \mathscr{E}_0,\mathscr{E}_1\in\mathrm{Bun}_G(S),\\
\phi:\mathscr{E}_0|_{C_S-\Gamma_{c_i}}\xrightarrow{\cong}\mathscr{E}_1|_{C_S-\Gamma_{c_i}}\bigr\}.
\end{multline*}
We have 
\[
\mathrm{Corr}_{\mathrm{Hk}^{\text{glob}}_{G,I},\mathrm{IC}_{V^I}}(\mathcal{P}_X,\mathcal{P}_X\langle d_I - 2|I|\rangle)\cong \mathrm{Hom}^0(\mathrm{IC}_{V^I}\boxstar \mathcal{P}_X, \mathcal{P}_X\langle d_I - 2|I|\rangle),
\]
where $\mathrm{Corr}$ denotes the set of cohomological correspondences reviewed in \S\ref{convention:corr} and $\boxstar$ is the Hecke action, constructed in \cite{arinkin2022stacklocalsystemsrestricted} and reviewed in \cite[\S 4.3.2]{LW}.
\subsubsection{Automorphic cohomological correspondence}
For local special correspondences \(c_{V^i}^{\text{loc}}\) of degrees \(d_i\) (defined in \S\ref{convention:plancherel}),
the local-to-global construction of \cite[Definition 4.41]{LW} produces a global cohomological correspondence
\[
c_{V^I}\in\mathrm{Corr}_{\mathrm{Hk}^{\text{glob}}_{G,I},\mathrm{IC}_{V^I}}(\mathcal{P}_X,\mathcal{P}_X\langle d_I - 2|I|\rangle),
\]
where \(d_I=\sum_i d_i\).

\subsubsection{Assumptions}
We assume \cite[Assumption 4.43]{LW} in \cite[Setting 4.42]{LW}, which is recalled in \S\ref{assumption4.42}. In particular,
we fix \(x\in[r-1]\), and by assumption, there exists a unique element $$c_{V^x\otimes V^{x+1}}^{\text{loc}}\in \Hom^{0}(V^x\otimes V^{x+1}\langle-d_x-d_{x+1}+2\rangle ,\PL_{X,\hbar})$$ such that \[c_{V^x\otimes V^{x+1}}^{\text{loc}}\cdot \hbar = [c_{V^x}^{\text{loc}},c_{V^{x+1}}^{\text{loc}}].\]

Let $[I]:=[r-1]$. The local special correspondences $c_{V^i}^{\text{loc}}$ for $i\neq x,x+1$ and $c_{V^x\otimes V^{x+1}}^{\text{loc}}$ give the automorphic cohomological correspondence $c_{V^{[I]}}$ via the local-to-global construction in \cite[Definition 4.41]{LW}.

Let \(V^{I,\mathrm{sw}}\) be obtained by interchanging the adjacent
factors \(V^x,V^{x+1}\).
\subsection{Main result}
Our main result is that \cite[Conjecture~4.45]{LW} is true.
\begin{maintheorem}[Automorphic commutator relation]
\label{thm:main}
Under Assumption~\ref{assumption4.43}, we have that
\begin{equation*}
\tag{A}\label{eq:main}
c_{V^I}-c_{V^{I,\mathrm{sw}}}
=\Delta^{\mathrm{Hk}}_{x,x+1,!}\,c_{V^{[I]}}
\end{equation*}
in
$
\Corr_{\mathrm{Hk}^{\text{glob}}_{G,I},\,\IC_{V^I}}
\bigl(\mathcal P_X,\mathcal P_X\langle d_I-2|I|\rangle\bigr),
$
where \(\Delta^{\mathrm{Hk}}_{x,x+1}\) is the Hecke diagonal that merges the two legs. 
\end{maintheorem}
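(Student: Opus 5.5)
The plan follows the strategy announced in the abstract: prove a universal-local version of \eqref{eq:main} on a stack of formal multidisks, then specialize to \(C\). Let \(\mathcal D^{(2)}\) be the classifying stack of formal multidisks with two ordered marked sections \(c_x,c_{x+1}\); equivalently, it parametrizes formal neighborhoods of the union \(\Gamma_{c_x}\cup\Gamma_{c_{x+1}}\), modulo automorphisms. Over \(\mathcal D^{(2)}\) I would build four objects:
\begin{enumerate}
\item a universal-local Hecke stack \(\mathrm{Hk}^{\mathrm{ul}}_{G,\{x,x+1\}}\), the quotient of the Beilinson--Drinfeld Grassmannian over the multidisk by the arc group and the automorphisms of the multidisk;
\item the universal-local unit object \(\mathcal P^{\mathrm{ul}}_X\), the pushforward of the constant sheaf from the arc space of \(X\);
\item the Hecke action \(\boxstar\) by fusion Satake sheaves \(\IC_{V^x\boxtimes V^{x+1}}\);
\item universal-local special correspondences \(c^{\mathrm{ul}}_{V^x\boxtimes V^{x+1}}\), \(c^{\mathrm{ul}}_{V^{x+1}\boxtimes V^x}\) (the latter pulled back along the swap), and \(c^{\mathrm{ul}}_{V^x\otimes V^{x+1}}\) supported on the diagonal.
\end{enumerate}
Each special correspondence comes from the corresponding local class \(c^{\mathrm{loc}}_{V^i}\), viewed as a map \(V^i\langle -d_i\rangle\to \PL_{X,\hbar}\), transported by the Satake/derived Satake description of \(\Hom(\IC_V\boxstar\mathcal P^{\mathrm{loc}}_X,\mathcal P^{\mathrm{loc}}_X)\).

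I would next check that the local-to-global construction of \cite[Definition 4.41]{LW} factors through these objects. The classifying map from \(C^2\), sending \((c_x,c_{x+1})\) to the formal neighborhood of the two points in \(C\), and the map \(\Bun_G^X\to\) (universal-local arc stack) pull back the universal-local correspondence. One then applies the six-functor formalism: base change, compatibility of \(\pi_!\) with restriction to formal neighborhoods, and composition with the correspondences at the remaining legs \(i\neq x,x+1\). The conclusion is that \(c_{V^I}\), \(c_{V^{I,\mathrm{sw}}}\) and \(\Delta^{\mathrm{Hk}}_{x,x+1,!}c_{V^{[I]}}\) are images of the three universal-local classes under one linear specialization map. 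This reduces \eqref{eq:main} to the identity
\[
c^{\mathrm{ul}}_{V^x\boxtimes V^{x+1}}-c^{\mathrm{ul}}_{V^{x+1}\boxtimes V^x}=\Delta_!\,c^{\mathrm{ul}}_{V^x\otimes V^{x+1}}
\]
in \(\Corr\) on \(\mathrm{Hk}^{\mathrm{ul}}\).

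The universal-local identity itself would be proved in three steps.
\begin{enumerate}
\item On the open part \(U\subset\mathcal D^{(2)}\) where the sections are disjoint, fusion factorizes everything into a product of two single-point local stacks. Both terms on the left become the external product \(c^{\mathrm{loc}}_{V^x}\boxtimes c^{\mathrm{loc}}_{V^{x+1}}\), so their difference restricts to zero on \(U\).
\item By the localization triangle for \(U\) and the diagonal \(\Delta\), the difference therefore lies in the image of \(\Delta_!\) from classes on the diagonal, up to a connecting term.
\item Purity and weight arguments show that the \(\Hom\)-spaces involved are concentrated in the appropriate degree. This kills any ambiguity and makes the restriction from \(\Hom\) on \(\mathcal D^{(2)}\) to the specialization along the diagonal injective on the relevant graded piece.
\end{enumerate}
Purity holds here because the universal-local objects are \(\mathrm{Aut}\)-equivariant ind-pro objects built from IC sheaves and arc spaces, with no global cohomology of \(C\) involved. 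This is why neither the vanishing hypotheses nor the condition \(g(C)\neq 1\) of \cite[Assumption 4.46]{LW} should be needed.

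It remains to identify the diagonal class. The nearby-cycle/specialization of the difference along the diagonal equals the commutator \([c^{\mathrm{loc}}_{V^x},c^{\mathrm{loc}}_{V^{x+1}}]\), computed in \(\PL_{X,\hbar}\). The loop rotation parameter \(\hbar\) appears here as the first Chern class of the normal bundle to the diagonal, that is, the equivariant parameter for automorphisms of the disk. By Assumption~\ref{assumption4.43} this commutator equals \(c^{\mathrm{loc}}_{V^x\otimes V^{x+1}}\cdot\hbar\). Dividing by \(\hbar\) is the connecting map of the localization triangle, and it produces \(\Delta_!c^{\mathrm{ul}}_{V^x\otimes V^{x+1}}\); the uniqueness part of the assumption guarantees this is well defined.

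I expect this last matching to be the main obstacle. It requires a precise compatibility between three things: the specialization to the diagonal in the fusion Grassmannian; the identification of \(\hbar\) with the Chern class of the normal bundle on \(\mathcal D^{(2)}\); and the product structure on the Plancherel algebra, including signs and shifts \(\langle\cdot\rangle\). My first attempt would compute it on the one-dimensional model \(\mathbb A^1=\{c_x-c_{x+1}\}\), \(\mathbb G_m\)-equivariantly. There the connecting map of \(j_*\) versus \(j_!\) is explicitly multiplication by \(\hbar^{-1}\) on equivariant cohomology.
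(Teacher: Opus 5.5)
Your architecture matches the paper's. It uses the stack $\mathcal M_2$ of formal multidisks with two ordered marked sections, the unit object $\delta_2$ and fusion kernels over it, and single-leg extension of the local classes. It then shows vanishing off the collision divisor, applies excision onto that divisor with $\hbar$ appearing as $c_1$ of its normal bundle, and finishes by pullback along $\gamma_C\colon C^2\to\mathcal M_2$ plus the functorialities of \cite{LW}.

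The gap is your Step~3, together with your explanation of why \cite[Assumption~4.46]{LW} becomes unnecessary. As written, Step~3 asks weight or degree-concentration arguments to make restriction to the diagonal injective, and your sketch gives no such argument. The injectivity actually needed is that of $\hbar\cdot(-)$ on $\Hom^0_{\check G}(V\otimes W\langle -d_V-d_W+2\rangle,\PL_{X,\hbar})$. This is exactly part~(1) of Assumption~\ref{assumption4.43}, a genuine hypothesis on the Plancherel algebra (essentially its $\hbar$-torsion-freeness). It does not follow from the universal-local objects being built from IC sheaves and arc spaces. Likewise, ``purity'' in the proof is not a weight statement. It is the isomorphism $i^!\delta_2\simeq\delta_1\langle-2\rangle$, obtained by pro-smooth base change from $B\mathbb G_m\hookrightarrow[\mathbb A^1/\mathbb G_m]$.

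Here is how the argument actually closes. Let $\epsilon$ be the difference of the two sides. From $j^*\epsilon=0$, excision together with this purity isomorphism gives $\epsilon=\mathrm{Gys}(\epsilon_0)$ exactly. There is no leftover connecting term to kill, and the non-uniqueness of the lift $\epsilon_0$ is harmless. The normal bundle of the collision divisor is the tangent line of the universal disk, so the self-intersection formula gives $i^*\mathrm{Gys}(u)=\hbar u$. Since $i^*\epsilon=[c^{\mathrm{loc}}_V,c^{\mathrm{loc}}_W]-\hbar\,c^{\mathrm{loc}}_{V\otimes W}=0$, you get $\hbar\epsilon_0=0$, and Assumption~\ref{assumption4.43}(1) then gives $\epsilon_0=0$.

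So ``dividing by $\hbar$'' is not the connecting map of a localization triangle; it is cancelling $\hbar$ in $i^*\mathrm{Gys}(u)=\hbar u$. This is also the real reason the genus condition disappears. The cancellation must be done on $\mathcal M_2$, before pulling back to $C^2$. After pullback along $\gamma_C$ the same operator becomes $c_1(T_C)$, which need not be injective and is zero for $g(C)=1$. Your last paragraph already contains this idea through ``the uniqueness part of the assumption'', so the repair is to replace Step~3 by it.

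Two smaller points. No nearby cycles are needed: everything is defined over all of $\mathcal M_2$, and plain $i^*$ with proper base change identifies the two restrictions with $c^{\mathrm{loc}}_Vc^{\mathrm{loc}}_W$ and $c^{\mathrm{loc}}_Wc^{\mathrm{loc}}_V$. That identification, like your ``pullback along the swap'', requires the fusion isomorphism $K_{V,W}\simeq K_{W,V}$ extended across the collision divisor and restricting there to the Satake commutativity constraint.
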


\subsection{Idea of the proof: universal-local geometry}
\subsubsection{Reduction to $|I|=2$ and the semi-local case}
The main difficulty is already present for two legs, so we will assume $|I|=2$ for now. Namely, we want to prove that $\epsilon^{\text{glob}}:=c_{V^I}-c_{V^{I,\mathrm{sw}}}
-\Delta^{\mathrm{Hk}}_{x,x+1,!}\,c_{V^{[I]}}$ is zero. One important observation in \cite{LW} is that although the statement of the automorphic commutator relation is global, it suffices to prove its semi-local analogue $\epsilon^{\text{semi-loc}}=0$ (\cite[\S4.6.3]{LW}), where semi-local refers to Beilinson--Drinfeld moduli spaces, i.e. local moduli spaces for a formal disk centered at a moving point on a fixed curve $C$ (see \cite[\S4.3]{LW}).
\subsubsection{Original approach in \cite{LW}}
The collision gives Cartesian squares
$$
\begin{tikzcd}
(LX/L^+G)_C \arrow[r,"i"] \arrow[d] & (LX/L^+G)_{C^2} \arrow[d] & (LX/L^+G)_{C^2-\Delta_C} \arrow[d] \arrow[l,"j"] \\
C \arrow[r,"i"]                     & C^2                       & C^2-\Delta_C \arrow[l,"j"]                      
\end{tikzcd}
$$
It is straightforward to see that $j^\ast\epsilon^{\text{semi-loc}}=0$, and Assumption~\ref{assumption4.43} on the local Plancherel algebra implies $i^\ast\epsilon^{\text{semi-loc}}=0$. Excision expresses $\epsilon^{\text{semi-loc}}$ as a Gysin pushforward $\mathrm{Gys}(\epsilon_0)$, and by the six-functor formalism, one obtains \(c_1(T_C)\cdot \textrm{adj}(\epsilon_0)=0\), where $\textrm{adj}$ is an isomorphism. Therefore, if $c_1(T_C)\cdot$ is injective, we can conclude the proof. This is why additional assumptions were needed for the original approach in \cite[\S 4.6.4]{LW}.
\subsubsection{New approach in this article}
The local-global formalism via semi-local in \cite{LW} is not strong enough to investigate collisions and fusions.

\textit{We propose a fourth stage, ``universal-local,''} and use the concept of formal multidisks with two ordered marked sections to investigate fusion phenomena locally. We define universal-local versions of the objects, spaces, sheaves, and Hecke actions in \cite[\S 4.3]{LW} and \cite{benzvi2024relativelanglandsduality}.

Their collision geometry provides input for fusion arguments in sheaf theory and allows us to prove a universal-local commutator identity, using only the local assumption on the injectivity of $\hbar\cdot(-)$, without an additional global assumption on the injectivity of $c_1(T_C)\cdot(-)$.

\begin{maintheorem}[Universal-local commutator identity]
\label{thm:universal}
Under Assumption~\ref{assumption4.43} we have that
\begin{equation*}
\tag{B}\label{eq:universal_intro}
c_{V,W}^{\text{univ-loc}}-c_{W,V}^{\text{univ-loc}}=\mathrm{Gys}(c_{V\otimes W}^{\text{loc}}),
\end{equation*}
where $c_{V,W}^{\text{univ-loc}}$ and $c_{W,V}^{\text{univ-loc}}$ are defined by Proposition~\ref{lem:spreading}.
\end{maintheorem}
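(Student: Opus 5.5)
We work on the classifying stack of formal multidisks with two ordered marked sections. Call it \(\mathcal D^{(2)}\). It carries the diagonal \(i\colon \mathcal D^{(1)}\hookrightarrow \mathcal D^{(2)}\), where the two sections coincide, and the open complement \(j\colon \mathcal D^{(2),\circ}\hookrightarrow \mathcal D^{(2)}\). Over \(\mathcal D^{(2)}\) we have the universal-local Hecke stack and the unit object, and \(\epsilon:=c_{V,W}^{\text{univ-loc}}-c_{W,V}^{\text{univ-loc}}-\mathrm{Gys}(c_{V\otimes W}^{\text{loc}})\) is a cohomological correspondence there. The plan is the same excision argument as in the original semi-local approach of \cite{LW}, run on this stack instead. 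The only global input that broke before was injectivity of \(c_1(T_C)\cdot(-)\). On \(\mathcal D^{(2)}\) the normal bundle to the diagonal is the universal tangent line of the formal disk, and its first Chern class should be exactly the loop-rotation parameter \(\hbar\). So the obstruction becomes multiplication by \(\hbar\), which is injective by Assumption~\ref{assumption4.43}.

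\textbf{Step 1: vanishing off the diagonal.} Compute \(j^\ast\epsilon\). Over the open locus the two marked points are disjoint, and fusion factorizes the universal-local Hecke stack as a product of two one-point local Hecke stacks. Both \(c_{V,W}^{\text{univ-loc}}\) and \(c_{W,V}^{\text{univ-loc}}\) restrict to the external product \(c_V^{\text{loc}}\boxtimes c_W^{\text{loc}}\); the construction in Proposition~\ref{lem:spreading} is designed so that the ordering is immaterial away from the collision. The Gysin term restricts to zero because it is supported on the diagonal. Hence \(j^\ast\epsilon=0\).

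\textbf{Step 2: reduce to a class on the diagonal.} By excision, \(\epsilon=\mathrm{Gys}(\epsilon_0)\) for a correspondence \(\epsilon_0\) on \(\mathcal D^{(1)}\) of degree shifted by \(-2\). Purity is needed here. The relevant Hom groups of correspondences between unit objects, twisted by \(\IC_V\boxtimes\IC_W\), must be pure, or at least concentrated in the right degrees. Then the excision sequence
\[
\Hom(\,\cdot\,,i_\ast i^!\,\cdot\,)\to\Hom(\,\cdot\,,\cdot\,)\to\Hom(\,\cdot\,,j_\ast j^\ast\,\cdot\,)
\]
is short exact in the degrees we need, and such an \(\epsilon_0\) exists and is unique.

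\textbf{Step 3: restrict back to the diagonal.} Compute \(i^\ast\epsilon\). By fusion, \(i^\ast\) of the universal-local Hecke action for the ordered pair \((V,W)\) is the convolution of \(c_V^{\text{loc}}\) and \(c_W^{\text{loc}}\) in the local Plancherel algebra \(\PL_{X,\hbar}\), and for \((W,V)\) it is the reverse convolution. So \(i^\ast\) of the difference is the commutator \([c_V^{\text{loc}},c_W^{\text{loc}}]\), viewed inside \(\PL_{X,\hbar}\) through the identification of correspondences on \(\mathcal D^{(1)}\) with \(\check G\)-equivariant Plancherel classes. By the self-intersection formula, \(i^\ast\mathrm{Gys}(\eta)=e(N)\cdot\eta\), where \(N\) is the normal bundle of the diagonal, so \(e(N)=\hbar\). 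Applying this twice gives
\[
\hbar\cdot\epsilon_0=i^\ast\epsilon=[c_V^{\text{loc}},c_W^{\text{loc}}]-\hbar\cdot c_{V\otimes W}^{\text{loc}}=0,
\]
where the last equality is the defining property in Assumption~\ref{assumption4.43}. Since \(\hbar\cdot(-)\) is injective, \(\epsilon_0=0\), and therefore \(\epsilon=0\), which is the identity~(B).

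\textbf{Main obstacle.} The hard part is Step 3, specifically the identification \(c_1(N_{\mathcal D^{(1)}/\mathcal D^{(2)}})=\hbar\) together with the compatibility of fusion with the Plancherel algebra structure. One must show that the equivariant parameter for the automorphisms of the formal disk, which acts on the classifying stack, matches the loop-rotation \(\hbar\) of \(\PL_{X,\hbar}\). One must also show that the ordering of the two marked sections produces the correct ordered product rather than its opposite, with consistent signs. I would first check this on the unit, where \(V=W\) is trivial, by computing the Euler class of the tangent line on \(B\,\Aut(D)\), which reduces to \(B\mathbb G_m\). I would then extend it by naturality of the Hecke action in \(V\) and \(W\). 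The purity input in Step 2 should follow from the pointwise purity of the period sheaf and of the Satake IC sheaves.
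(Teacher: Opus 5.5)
Your proposal is correct and follows the paper's proof essentially step for step. You get \(j^*\epsilon=0\) from Proposition~\ref{lem:spreading}(1) and the support of the Gysin term, use excision to write \(\epsilon=\mathrm{Gys}(\epsilon_0)\), obtain \(\hbar\epsilon_0=i^*\epsilon=[c_V^{\text{loc}},c_W^{\text{loc}}]-\hbar c_{V\otimes W}^{\text{loc}}=0\) via \(c_1(N_{d_0})=\hbar\) (Lemmas~\ref{lem:normal} and~\ref{lem:purity}), and conclude by injectivity of \(\hbar\).

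One correction: Step~2 does not need any weight purity of the period or Satake sheaves. Existence of \(\epsilon_0\) already follows from exactness of the long exact Hom sequence at the middle term, combined with the cohomological purity isomorphism \(i^!\delta_2\simeq\delta_1\langle-2\rangle\). Uniqueness of \(\epsilon_0\) is never used, since any lift satisfies \(\hbar\epsilon_0=i^*\epsilon=0\).
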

Then the six-functor formalism provides local-to-global compatibility and deduces Main Theorem~\ref{thm:main} from Main Theorem~\ref{thm:universal}.
\subsubsection{Related ideas in literature}
Our definition of formal multidisks with two ordered marked sections is a special case of \cite[Definition 2.25]{Nafcha}. In \cite[Remark 4.49]{LW}, the authors hoped but failed to prove the conjecture by reducing to the $[C=\mathbb{A}^1/\mathbb{G}_m]$ case. One could view Lemma~\ref{lem:purity} as a realization of this idea.

\subsection{Organization of the paper}
Section~\ref{sec:local} fixes the local notation and recalls relevant settings of \cite{LW}.
Section~\ref{sec:multidisks} defines formal multidisks with two marked sections and introduces classifying stack $\mathcal{M}_2$ of formal disks with marked sections, partial Hecke correspondences and the unit object analogous to those in the setting with one marked section. Section~\ref{sec:geom} studies the geometry of $\mathcal{M}_2$ and collisions. Section~\ref{sec:hecke} studies the universal-local Hecke action and special correspondences. Section~\ref{sec:universal} states and proves the universal-local commutator identity. Section~\ref{sec:global} completes the proof of Main Theorem~\ref{thm:main}, by passing from universal-local to semi-local and from semi-local to global.

\subsection{Generative AI usage}
The proof of \cite[Conjecture 4.45]{LW} was initially generated by the mathematical reasoning agent Rethlas~\cite{Rethlas}, enhanced by OSVerify~\cite{osv}, an agentic verification framework. The author verified the proof, reorganized it into a paper and rewrote certain proofs for clearer exposition. Then the author used Codex to check typos and improve the writing. Details of generative AI usage are disclosed in Appendix~\ref{app:ai} for transparency. The author is responsible for the correctness of the paper.
\subsection*{Acknowledgement}
The author would like to thank Zeyu Wang for the collaboration on \cite{LW}, many meaningful discussions, and his feedback on this manuscript. The author is grateful to Fan Nie, Haotian Ye, Zihao Wang, Stefano Ermon and James Zou for collaboration on OSVerify, and to Haocheng Ju, Jiedong Jiang, Guoxiong Gao, Yuefeng Wang, Zeming Sun, Leheng Chen, Bin Wu, Liang Xiao and Bin Dong for collaboration on Rethlas. The author is deeply grateful to his advisor Xinwen Zhu for his constant support.

 \section{Notations and Conventions}
\label{sec:local}

We retain the conventions and setup of \S\ref{notations1}.

\subsection{Sheaf theory}
We use the sheaf theory on étale stacks of \cite[\S4.1]{LW}, denoted by $\mathrm{Shv}(-)$.

\subsection{Cohomological correspondences}\label{convention:corr}
For a correspondence \(Y\xleftarrow{h_L}H\xrightarrow{h_R}Y\) of stacks, we define a cohomological correspondence $c$ from \(\mathcal F\) to
\(\mathcal G\)  (supported on $H$ with
kernel \(K\)) to be an element  
\[
c\in \mathrm{Hom}^0(h_R^*\mathcal F\otimes K,h_L^!\mathcal G)\cong \mathrm{Hom}^0(h_{L,!}(h_R^*\mathcal F\otimes K),\mathcal G).
\]
See \cite[\S4.2]{LW} for the functorialities of cohomological correspondences, including pullback and pushforward.
\subsection{Geometric Setup}
Now we recall the geometric setup in \cite[\S4.3]{LW}.
\subsubsection{Formal disk}\label{convention:formal-disc}
Let \(D=\operatorname{Spf}\mathbb F_q[\![t]\!]\), and let \(\operatorname{Aut}(D)\) denote the group of automorphisms preserving the origin.
\subsubsection{Loop spaces}
We put $Y_1=LX/(L^+G\rtimes\operatorname{Aut}(D)),
P_1=L^+X/(L^+G\rtimes\operatorname{Aut}(D)),$ where $LX(R)=X(R(\!(t)\!)), L^+X(R)=X(R[\![t]\!])$ are the \textit{loop space} and \textit{positive loop space}. 

There is a natural map $i_1:P_1\hookrightarrow Y_1$, and the \textit{unit object} $\delta_1=i_{1,*}k_{P_1}.$ 
\subsubsection{Local Hecke action}
There is a natural action of $\mathrm{Sat}_{G,\hbar}:=\mathrm{Shv}(\mathrm{Hk}^{\text{loc}}_G)$ on $\mathrm{Sat}_{X,\hbar}:=\mathrm{Shv}(Y_1)$, where $\mathrm{Hk}^{\text{loc}}_G:=L^+G\backslash LG/(L^+G\rtimes\operatorname{Aut}(D))$ is the \textit{loop-rotated local Hecke stack}.

\subsubsection{Plancherel algebra}\label{convention:plancherel}
The \textit{loop-rotated Plancherel algebra} \(\PL_{X,\hbar}\) is then defined to be 
$$\PL_{X,\hbar}=\underline{\mathrm{End}}_{\mathrm{Sat}_{G,\hbar}}(\delta_1) \in \mathrm{Sat}_{G,\hbar},$$
the internal endomorphism algebra of the unit object. In particular, we have that
\begin{equation}
\label{eq:local-hom}
\operatorname{Hom}^0_{\mathrm{Sat}_{G,\hbar}}
  (\operatorname{IC}_V*\delta_1,\delta_1\langle d\rangle)
\simeq
\operatorname{Hom}^0_{\check G}
  (V\langle-d\rangle,\mathrm{PL}_{X,\hbar}).
\end{equation}

Following \cite[Definition 4.40]{LW}, we define a \textit{local special cohomological correspondence} of type $V\in\mathrm{Rep}(\check{G})^{\heartsuit,\omega}$ of degree $d$ to be an element
\[
c^{\text{loc}}_V \in \mathrm{Hom}^0_{\check{G}}(V \langle -d \rangle,\mathrm{PL}_{X,\hbar}).
\]

Multiplication in \(\PL_{X,\hbar}\) corresponds to composition of
local Hecke correspondences. 

The parameter \(\hbar\) is identified with the first Chern class of the tangent character of \(\Aut(D)\), with the
normalization of \cite[\S2.11]{LW}.

\subsection{Assumptions on Plancherel algebras}\label{assumption4.42}
The automorphic commutator relation conjecture \cite[Conjecture 4.45]{LW} requires two assumptions on Plancherel algebras (\cite[Setting 4.42, Assumption 4.43]{LW}).
\begin{assumption}\label{assumption4.43}
We assume the following.
\begin{enumerate}
    \item \label{eq:injectivity} The map
    \begin{multline*}
    \hbar:\operatorname{Hom}^0_{\check G} (V\otimes W\langle-d_V-d_W+2\rangle,\mathrm{PL}_{X,\hbar})\\
    \rightarrow \operatorname{Hom}^0_{\check G} (V\otimes W\langle-d_V-d_W\rangle,\mathrm{PL}_{X,\hbar})
    \end{multline*}
    is injective. 
    \item \label{eq:local-bracket} There exists $c_{V\otimes W}^{\text{loc}}\in \Hom^{0}(V\otimes W\langle-d_V-d_W+2\rangle ,\PL_{X,\hbar})$, such that  $\hbar c_{V\otimes W}^{\text{loc}}=[c_V^{\text{loc}},c_W^{\text{loc}}].$
\end{enumerate}
\end{assumption}
\begin{remark}
When $T^\ast X$ is a hyperspherical $G$-Hamiltonian space (in the sense of \cite[\S 3.5]{benzvi2024relativelanglandsduality}), Assumption~\ref{assumption4.43} is a direct consequence of the local relative Langlands conjecture \cite[Conjecture 8.5.2]{benzvi2024relativelanglandsduality}. The flatness of $\mathrm{PL}_{X,\hbar}$ over $k[\hbar]$, and hence Assumption~\ref{assumption4.43}\eqref{eq:injectivity} in this case, is claimed in \cite[Corollary 1.3]{propertiesplan}.
\end{remark}

\section{Multidisks and Moduli Spaces}
\label{sec:multidisks}
In this section, we define formal multidisks with two ordered marked sections and introduce analogues of the local Hecke stack, Hecke correspondences, and the unit object in this setting.
\subsection{Formal multidisk}
Let \(S=\operatorname{Spec}R\) and \(a,b\in R\). A formal multidisk (with two marked sections) is a formal \(S\)-scheme
\[
D_{a,b,S}:=\operatorname{Spf}\!\left(R[z]^{\wedge}_{(z-a)(z-b)}\right),
\]
equipped with the ordered pair of sections \(s_a\) and \(s_b\) defined by \(z=a\) and \(z=b\).

\subsection{Classifying stack of formal multidisks}
Let \(\mathcal M_2\) denote the stack for the étale topology whose objects over \(S\) are formal \(S\)-schemes with two ordered marked sections, étale locally isomorphic to some formal multidisk $D_{a,b,S}$. Morphisms are continuous isomorphisms over \(S\) preserving each marked section. Pullback is defined by completed base change.

\begin{remark}
The stack $\mathcal M_2$ is a multidisk analogue of the stack of formal disks with one marked section, namely
$$
\mathcal M_1\simeq B\operatorname{Aut}(D),
$$
where $\operatorname{Aut}(D)$ is the group of automorphisms of the formal disk $D$ preserving the origin, defined in \S\ref{convention:formal-disc}.
\end{remark}
We have the natural morphism
\begin{equation}
\label{eq:collision}
d_0:\mathcal M_1\hookrightarrow\mathcal M_2,
\end{equation}
which repeats the marked section. 
By Proposition~\ref{lem:coordinates}, $d_0$ is a closed immersion of finite presentation, and its complement is canonically identified with $\mathcal{M}_1\times\mathcal{M}_1$.

\subsection{Local Hecke stack for formal multidisks}
For \((D;s_1,s_2)\in\mathcal M_2(S)\), write
\(D_i^\times\) for the punctured multidisk at \(s_i\) and
\(D^\times=D_1^\times\cap D_2^\times\).
The local Hecke stack for multidisks \(\mathrm{Hk}_2\to\mathcal M_2\) has fiber
\[
\mathrm{Hk}_2(D;s_1,s_2)
=
\bigl\{(\mathcal E_0,\mathcal E_2,\phi)
\mid
\phi:\mathcal E_0|_{D^\times}
\xrightarrow{\sim}\mathcal E_2|_{D^\times}\bigr\},
\]
where \(\mathcal E_j\) are
\(G\)-bundles on \(D\).
Define \(\mathrm{Hk}_{i|2}\) similarly using \(D_i^\times\) instead of $D^\times$.

\subsection{Partial Hecke correspondences for formal multidisks}
Let \(\mathcal B_2\) be the stack whose $S$-points are 
$$\mathcal B_2(S) = \{(D;s_1,s_2;\mathcal E): (D;s_1,s_2)\in\mathcal M_2(S), \mathcal E\in\operatorname{Bun}_G(D)\}$$
for a test scheme $S$. 
There is a natural projection \(\mathcal B_2\to\mathcal M_2\) forgetting the bundle.

Then we have Hecke correspondence in this setting
\begin{equation}
\begin{tikzcd}
              & \mathrm{Hk}_{i|2} \arrow[ld, "h_L"'] \arrow[rd, "h_R"] &               \\
\mathcal{B}_2 &                                              & \mathcal{B}_2
\end{tikzcd}
\end{equation}
where
\[
\begin{aligned}
h_R(D;s_1,s_2;\mathcal E,\mathcal E',\phi) &=(D;s_1,s_2;\mathcal E),\\
h_L(D;s_1,s_2;\mathcal E,\mathcal E',\phi)&=(D;s_1,s_2;\mathcal E').
\end{aligned}
\]

\subsection{Convolution Hecke stack for formal multidisks}
For \((i,j)=(1,2),(2,1)\), define
\[
\widetilde{\mathrm{Hk}}_{ij}
:=\mathrm{Hk}_{i|2}\times_{h_L,\mathcal B_2,h_R}\mathrm{Hk}_{j|2}.
\]
Over \((D;s_1,s_2)\), its objects are chains
\[
\begin{tikzcd}[column sep=large]
\mathcal E_0 \arrow[r,dashed,"\phi_i"]
  & \mathcal E_1 \arrow[r,dashed,"\phi_j"]
  & \mathcal E_2
\end{tikzcd}
\]
of \(G\)-bundles on \(D\), where
\[
\begin{aligned}
\phi_i&:\mathcal E_0|_{D_i^\times}
  \xrightarrow{\sim}\mathcal E_1|_{D_i^\times},\\
\phi_j&:\mathcal E_1|_{D_j^\times}
  \xrightarrow{\sim}\mathcal E_2|_{D_j^\times}.
\end{aligned}
\]
There are two convolution maps
\[
\begin{tikzcd}[column sep=large]
\widetilde{\mathrm{Hk}}_{12} \arrow[r,"m_{12}"]
  & \mathrm{Hk}_2
  & \widetilde{\mathrm{Hk}}_{21} \arrow[l,"m_{21}"']
\end{tikzcd}
\]
given by
\[
m_{ij}(\mathcal E_0,\mathcal E_1,\mathcal E_2;\phi_i,\phi_j)
=
\bigl(\mathcal E_0,\mathcal E_2;
      \phi_j|_{D^\times}\circ\phi_i|_{D^\times}\bigr).
\]
\begin{remark}
This is an analogue of the ordinary loop-rotated local Hecke stack $\mathrm{Hk}^{\text{loc}}_G$ for formal multidisks.
\end{remark}

\subsection{Unit object}
Over \(\mathcal M_2\), let \(Y_2\) classify a \(G\)-bundle on the formal multidisk $(D,s_a,s_b)$ and a section of its associated \(X\)-bundle on the punctured multidisk $D^\times$. 

Let $$i_2:P_2\hookrightarrow Y_2$$ be the substack which requires that the $X$-section extends to the whole multidisk.
\begin{remark}
The space $Y_2\rightarrow \mathcal{M}_2$ is an analogue of $Y_1:=LX/(L^+G\rtimes \mathrm{Aut}(D))\rightarrow B\mathrm{Aut}(D)$, and $P_2$ is an analogue of $P_1:=L^+X/(L^+G\rtimes \mathrm{Aut}(D))$.
\end{remark}

We define the unit object in this setting by 
\begin{equation}
\label{eq:delta}
\delta_2=i_{2,*}k_{P_2}.
\end{equation}

\subsection{Collision locus}
The restrictions of $(Y_2,P_2,\delta_2)$ to the collision locus are \(Y_1,P_1,\delta_1\) of Section~\ref{sec:local}.
 We write
\begin{equation}
\label{eq:strata}
i:Y_1\hookrightarrow Y_2,\qquad i_P:P_1\hookrightarrow P_2,
\quad j:Y_{2,\mathrm{disj}}\hookrightarrow Y_2.
\end{equation}

Note that \(i,i_P\) are the base changes of \(d_0:\mathcal{M}_1\hookrightarrow \mathcal{M}_2\). 
Therefore, $i,i_P$ are closed immersions of finite presentation and \(j\) is a quasi-compact open immersion by Proposition~\ref{lem:coordinates}.

\section{Geometry of the Moduli Stack of Formal Multidisks \texorpdfstring{$\mathcal{M}_2$}{M2}}
\label{sec:geom}
In this section, we study the geometry of $\mathcal{M}_2$ and deduce its consequences for sheaf theory.
\subsection{Geometry of $\mathcal{M}_2$}
\subsubsection{Coordinate presentation and collision locus of $\mathcal{M}_2$}
\begin{proposition}\label{lem:coordinates}
We have the following description of $\mathcal{M}_2$ and the collision locus.
\begin{enumerate}
\item \label{eq:atlas} The morphism
$\pi:\mathbb A^2_{\mathbb F_q}\longrightarrow\mathcal M_2,$ defined by $
(a,b)\longmapsto (D_{a,b},s_a,s_b),$ is a representable, surjective pro-smooth atlas. Its relation scheme \(\mathbb A^2\times_{\mathcal M_2}\mathbb A^2\) admits an inverse-limit
presentation by affine schemes of finite presentation, smooth over \(\mathbb A^2\) via either projection, with transition morphisms whose fibers are affine spaces.
\item The morphism $ d_0:\mathcal M_1\hookrightarrow\mathcal M_2$ is a representable effective Cartier divisor of finite presentation. Its image is the locus where the two marked sections coincide, and its complement is canonically isomorphic to \(\mathcal M_1\times\mathcal M_1\).
\end{enumerate}
\end{proposition}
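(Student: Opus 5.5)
The plan is to reduce everything to explicit coordinate computations on the standard multidisks $D_{a,b,S}$. For part~(1), representability of $\pi$ means identifying, for $(D;s_1,s_2)\in\mathcal M_2(S)$, the fiber $S\times_{\mathcal M_2}\mathbb A^2$ with the scheme of pairs $(a,b)\in R^2$ together with an isomorphism $D_{a,b,S}\xrightarrow{\sim}D$ preserving the ordered sections. Étale locally on $S$ we may take $D=D_{a_0,b_0,S}$. An isomorphism $D_{a,b,S}\to D_{a_0,b_0,S}$ is then determined by the image $f$ of $z$, which must be a topological generator. So $f\in R[z]^\wedge_{(z-a)(z-b)}$ must satisfy $f(a)=a_0$ and $f(b)=b_0$, and the induced map must be an isomorphism.

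To make this concrete, write elements of $R[z]^\wedge_{(z-a)(z-b)}$ via the ``Newton'' expansion $\sum_{n\ge0}(c_n+c_n'z)\bigl((z-a)(z-b)\bigr)^n$. This identifies the ring with the set of sequences $(c_n,c_n')$, so that $\mathrm{Isom}$ becomes an open subscheme of an infinite-dimensional affine space cut out by the two marking conditions. Truncating modulo $((z-a)(z-b))^{N}$ gives the finite-presentation affine schemes $\mathrm{Isom}_N$. The transition maps $\mathrm{Isom}_{N+1}\to\mathrm{Isom}_N$ are torsors under the vector group of degree-$N$ coefficients, so their fibers are affine spaces. Smoothness of $\mathrm{Isom}_N$ over $\mathbb A^2$ follows from the fact that the marking conditions are linear in $(c_0,c_0')$, with a Jacobian that is invertible on the generator locus. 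The invertibility condition is the open one: the image of $f$ in the first jet generates. Surjectivity of $\pi$ is immediate from the definition of $\mathcal M_2$ as étale locally standard.

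For part~(2), note that $d_0$ pulled back along $\pi$ is the diagonal $\{a=b\}\subset\mathbb A^2$, since $D_{a,a}=\operatorname{Spf}R[z]^\wedge_{(z-a)^2}=\operatorname{Spf}R[\![z-a]\!]$ with repeated section. The substantive check is that this locus is well defined on $\mathcal M_2$, i.e.\ preserved by the groupoid. Intrinsically, it is the vanishing locus of the section $s_1^*\mathcal I_{s_2}$ of the line bundle $s_1^*\mathcal O(-s_2)$, where $\mathcal I_{s_2}=(z-b)$ is the ideal of the second section. In coordinates this section is $a-b$ times a unit, which gives an effective Cartier divisor of finite presentation, and representability follows. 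The identification $\mathcal M_1\simeq B\mathrm{Aut}(D)$ with the diagonal locus uses $D_{a,a}\cong D_S$.

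For the complement, when $a-b$ is invertible the Chinese remainder theorem gives $R[z]^\wedge_{(z-a)(z-b)}\cong R[\![z-a]\!]\times R[\![z-b]\!]$, because $(z-a)$ and $(z-b)$ are comaximal. Hence $D=D_1\sqcup D_2$ canonically, with each component a one-pointed disk, which defines the map to $\mathcal M_1\times\mathcal M_1$. The inverse sends a pair of disks to their disjoint union. To show the disjoint union lies in $\mathcal M_2$, locally pick coordinates $t_1,t_2$ and glue $z=t_1$ on $D_1$ and $z=t_2+1$ on $D_2$, which identifies it with $D_{0,1}$. The two constructions are mutually inverse on automorphisms as well. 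The main obstacle is the part~(1) bookkeeping: setting up the inverse-limit presentation of the isomorphism scheme and checking smoothness over $\mathbb A^2$ via \emph{both} projections, uniformly including the collision locus where the Newton basis degenerates to $\{(z-a)^m\}$. For this I would verify the Jacobian criterion directly on the finite truncations.
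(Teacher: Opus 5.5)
Your overall route is the paper's. You parametrize marked isomorphisms by the coefficients of the expansion in powers of the monic quadratic, truncate to get finite-level schemes, descend the ideal of the diagonal, and split the complement by the Chinese remainder theorem. Your intrinsic description of the divisor via $s_1^*\mathcal I_{s_2}\to s_1^*\mathcal O_D=\mathcal O_S$ is a clean substitute for the paper's cocycle check $p_2^*(b-a)=b_0\,p_1^*(b-a)$; note that the canonical section lives in $s_1^*\mathcal O(s_2)$, not in $s_1^*\mathcal O(-s_2)=s_1^*\mathcal I_{s_2}$. Your explicit inverse on the complement also fills in what the paper leaves implicit.

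The one step that fails as written is the smoothness argument, which you yourself flag as the main obstacle. Because you take the \emph{varying} disk $D_{a,b}$ as the source, the marking conditions $c_0+c_0'a=a_0$, $c_0+c_0'b=b_0$, read as a linear system in $(c_0,c_0')$, have Vandermonde determinant $b-a$. This vanishes exactly on the collision locus, so that Jacobian is not invertible there, whatever the generator condition says. The correct move is to solve for the other unknowns. On the generator locus $c_0'$ is a unit, so $a=(a_0-c_0)/c_0'$ and $b=(b_0-c_0)/c_0'$, with Jacobian $c_0'^2$ in $(a,b)$. The fiber over $S$ is then an open subscheme of $\operatorname{Spec}R[c_n,c_n']_{n\ge0}$. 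Over the other projection of the relation scheme, the same equations simply express $(a_0,b_0)$; alternatively, use the groupoid inversion.

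The paper sidesteps this by taking the marked isomorphism from the fixed standard disk to the varying one. Then $F=\varphi^\sharp(u)=\sum_n(a_n+b_nt)(t(t-s))^n$ is expanded in a fixed basis, and the varying points are just $(\alpha,\beta)=(F(0),F(s))=(a_0,a_0+b_0s)$. Nothing has to be solved, and since division by the monic quadratic is uniform in $s$, nothing degenerates at $s=0$.

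There are three smaller points.
\begin{enumerate}
\item The open condition is not generation modulo $(z-a)(z-b)$ alone. You need $c_0'$, $f'(a)=c_0'-(b-a)(c_1+c_1'a)$ and $f'(b)=c_0'+(b-a)(c_1+c_1'b)$ to be units; these are the paper's $b_0$, $b_0-sa_1$, $b_0+sa_1+s^2b_1$. For instance, $f=z^2=z+z(z-1)$ on $D_{0,1}$ has $c_0'=1$ but $f'(0)=0$.
\item Consequently your inverse system must start at the level that sees $(c_1,c_1')$. Otherwise the first transition map has fibers equal to the complement of two lines in $\mathbb A^2$, not affine spaces, contrary to the statement.
\item To get pro-smoothness of $\pi$ over an arbitrary $S$, and not only \'etale locally, you should observe that your truncations are intrinsic. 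They are marked isomorphisms of the finite-order neighborhoods $V(\mathcal J^N)$ with $\mathcal J=\mathcal I_{s_1}\mathcal I_{s_2}$, so they descend from the standard charts. This is exactly the descent step the paper carries out.
\end{enumerate}
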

\begin{proof}
\textbf{Proof of (1). }
\textbf{We first prove the pro-smoothness of $\pi$.}

\begin{lemma}\label{localcor}
We have the following Cartesian diagram
$$
\begin{tikzcd}
\mathrm{Spec}A \arrow[r,"p"] \arrow[d,"r"] & S=\mathrm{Spec}R \arrow[d,"{(D_{a,b},s_a,s_b)}"] \\
\mathbb{A}^2 \arrow[r,"\pi"]             & \mathcal{M}_2             
\end{tikzcd}
$$
where $s=b-a$ and
\begin{equation}
\label{eq:coordinate-ring}
A = R\bigl[
a_0,b_0,a_1,b_1,a_2,b_2,\ldots,
\bigl(b_0(b_0-sa_1)(b_0+sa_1+s^2b_1)\bigr)^{-1}
\bigr].
\end{equation}
In particular, the maps $p$ and $\pi$ are pro-smooth.
\end{lemma}
\begin{proof}[Proof of Lemma~\ref{localcor}]
We denote the fiber product by $Z:=\mathbb{A}^2\times_{\mathcal M_2}S.$ \par
\textbf{We first describe $Z$ and the map $r$ explicitly.}
For an \(R\)-algebra \(R'\), an \(R'\)-point of \(Z\) consists of a pair
\((\alpha,\beta)\in \mathbb{A}^2(R')\) together with a specified marked isomorphism
\[
\varphi:
(D_{a,b};s_a,s_b)_{R'}
\xrightarrow{\sim}
(D_{\alpha,\beta};s_\alpha,s_\beta).
\]

Translate the source coordinate by setting
\[
t=z-a,\qquad s=b-a,\qquad f=t(t-s).
\]
Let \(u\) denote the standard coordinate on \(D_{\alpha,\beta}\).
The isomorphism \(\varphi\) induces an isomorphism of complete
\(R'\)-algebras
\[
\varphi^\sharp:
R'[u]^\wedge_{(u-\alpha)(u-\beta)}
\xrightarrow{\sim}
R'[t]^\wedge_f.
\]
Define the polynomial
\[
F(t):=\varphi^\sharp(u).
\]
Since \(\varphi^\sharp\) is continuous and \(R'\)-linear, it is
determined by \(F\). Preservation of the ordered marked sections
requires
\[
\alpha=F(0),\qquad \beta=F(s).
\]

Division by the monic polynomial \(f\) gives a unique expansion
\[
F(t)=\sum_{n\geq0}(a_n+b_nt)f^n.
\]
Evaluating at the marked sections yields
\[
\alpha=a_0,\qquad \beta=a_0+b_0s.
\]

By explicit coordinate calculation,  one has that $\varphi$ is a marked isomorphism, i.e. substitution by
\(u\mapsto F(t)\) gives a marked isomorphism, if and only if
$
b_0, b_0-sa_1, b_0+sa_1+s^2b_1
$
are invertible. Therefore, we have that
\[
Z\simeq\operatorname{Spec}
R[a_0,b_0,a_1,b_1,\ldots,(b_0(b_0-sa_1)(b_0+sa_1+s^2b_1))^{-1}],
\]
and under this identification, the map \(r:Z\to \mathbb{A}^2\) is given by
\[
(a_n,b_n)_{n\geq0}
\longmapsto(a_0,a_0+b_0s).
\]

\textbf{Now we prove pro-smoothness of $p$ and $\pi$}.

For \(N\geq1\), set
\[
Z_N=\operatorname{Spec}
R[a_0,b_0,\ldots,a_N,b_N,(b_0(b_0-sa_1)(b_0+sa_1+s^2b_1))^{-1}].
\]
Each \(Z_N\) is an open subscheme of \(\mathbb A_R^{2N+2}\), hence is
smooth and of finite presentation over \(R\). Moreover,
\[
Z_{N+1}\simeq Z_N\times_S\mathbb A_S^2,
\qquad
Z\simeq\varprojlim_{N\geq1}Z_N.
\]
Thus the map $p$, i.e. the base change of \(\pi\) along a standard model $(D_{a,b},s_a,s_b)$, is affine
and pro-smooth.

For an arbitrary object
\((\mathfrak D;s_1,s_2)\) over \(S\), define
\[
\mathcal J=\mathcal I_{s_1}\mathcal I_{s_2},
\qquad
\mathfrak D_N=V(\mathcal J^{N+1}).
\]
The finite-level coordinate functor consists of functions \(v\) on
\(\mathfrak D_N\) for which substitution induces an isomorphism
\[
\mathcal O_S[u]\big/
\bigl((u-s_1^*v)(u-s_2^*v)\bigr)^{N+1}
\xrightarrow{\sim}
(\pi_N)_*\mathcal O_{\mathfrak D_N},
\qquad u\longmapsto v.
\]
Here $\pi_N:\mathfrak D_N\to S$ is the structure morphism. This definition is intrinsic to the marked formal scheme, independent of the choice of coordinates.
On standard charts it is represented by \(Z_N\) above, and marked
isomorphisms identify these functors by pullback of functions.
The resulting identifications satisfy the cocycle condition.
Effective affine descent therefore produces the finite-level
coordinate schemes $\mathcal {Z}_N$ over \(S\). The inverse limit $\varprojlim_N \mathcal{Z}_N$ represents
\(S\times_{\mathcal M_2}\mathbb A^2\) and is pro-smooth over \(S\). Therefore, $\pi$ is representable and pro-smooth.
\end{proof}
\textbf{It remains to prove the surjectivity of $\pi$ and the stated properties of $\mathbb{A}^2\times_{\mathcal M_2}\mathbb{A}^2$.}\par
The map $\pi$ is surjective by definition since every object of $\mathcal{M}_2$ is \'etale locally a standard model $(D_{a,b},s_a,s_b)$ and hence $\pi$ \'etale locally has a section.\par

Taking \(S=\mathbb{A}^2\xrightarrow{\pi}\mathcal{M}_2\)  gives the relation
scheme
\[
\mathcal R:=\mathbb{A}^2\times_{\mathcal M_2}\mathbb{A}^2.
\]
Apply Lemma~\ref{localcor} to $S=\mathbb{A}^2$, and we complete the proof of (1).
\par

\textbf{Now we prove (2).}

Let \(p_1,p_2:\mathcal R\rightrightarrows \mathbb{A}^2\) denote the projections
to the source and target presentations. On \(\mathbb{A}^2\), the collision
locus is defined by \(b-a=0\). On \(\mathcal R\), the preceding
calculation in the proof of Lemma~\ref{localcor} gives
\[
p_2^*(b-a)
=\beta-\alpha
=F(s)-F(0)
=b_0s
=b_0p_1^*(b-a).
\]
Since \(b_0\) is invertible, the pulled-back ideals agree:
\[
p_2^*((b-a))=p_1^*((b-a)).
\]
Their canonical identifications satisfy the cocycle condition,
so the ideal \((b-a)\) descends along the faithfully flat atlas
to an ideal sheaf on \(\mathcal M_2\).

Since \((b-a)\) is an invertible ideal
on \(\mathbb{A}^2\), its descent gives an effective Cartier divisor on \(\mathcal M_2\).
The associated closed immersion is representable and of finite presentation by descent from \(V(b-a)\hookrightarrow \mathbb{A}^2\).

Since completion along $t^2$ is the same as completion along $t$, this collision divisor is canonically identified with \(\mathcal M_1\),
compatibly with arbitrary base change.

On the complement, \(s\) is invertible. The Chinese remainder theorem
then splits the completed algebra into the completions at \(t=0\) and
\(t=s\). Therefore, the complement is identified with \(\mathcal M_1\times\mathcal M_1\).
\end{proof}

\subsubsection{Universal formal disk}
The universal pointed formal disk is the
tautological formal family
\[
\pi_{\mathrm{univ}}:
\mathfrak D_{\mathrm{univ}}\longrightarrow\mathcal M_1,
\]
together with the universal marked section
\[
\sigma:\mathcal M_1\longrightarrow\mathfrak D_{\mathrm{univ}},
\]
with
$\pi_{\mathrm{univ}}\circ\sigma=\operatorname{id}_{\mathcal M_1}.$

For every scheme \(S\) and morphism
\(f:S\to\mathcal M_1\) classifying a pointed formal disk
\((D_S\to S,p:S\to D_S)\), there is a canonical isomorphism of
pointed formal families
\[
\bigl(
S\times_{\mathcal M_1}\mathfrak D_{\mathrm{univ}},
\sigma_S
\bigr)
\simeq
(D_S,p),
\]
where \(\sigma_S\) is the base change of \(\sigma\).
These identifications are compatible with base change and
isomorphisms of pointed formal disks.

The tangent line at the universal marked section is
\[
\mathcal T_{\mathrm{univ}}
:=
\sigma^*T_{\mathfrak D_{\mathrm{univ}}/\mathcal M_1}.
\]
Its pullback along \(f\) is canonically identified with
\[
f^*\mathcal T_{\mathrm{univ}}
\simeq p^*T_{D_S/S}.
\]
\subsubsection{The normal bundle of the collision locus}
\begin{lemma}
\label{lem:normal}
The normal bundle of \(d_0:\mathcal{M}_1\hookrightarrow \mathcal{M}_2\) is canonically isomorphic to $\mathcal{T}_{\mathrm{univ}}$. Therefore, its first Chern
class is \(\hbar\), with the convention of \cite[\S2.11]{LW}.
\end{lemma}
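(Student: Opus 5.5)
The plan is to compute the normal bundle on the atlas and then descend it. By Proposition~\ref{lem:coordinates}(2), the collision divisor $d_0$ is cut out by the ideal descended from $(b-a)$ on $\mathbb A^2$. Hence its conormal bundle is $\mathcal I/\mathcal I^2$, where $\mathcal I$ is the descended ideal. On the atlas $\pi:\mathbb A^2\to\mathcal M_2$ this is the free module on the generator $s=b-a$ restricted to the diagonal $\{a=b\}$. To describe it intrinsically, I would work on an arbitrary object $(\mathfrak D;s_1,s_2)$ over $S$ with $s_1=s_2=p$, i.e.\ over $\mathcal M_1$. I would then construct a canonical map from a first-order deformation of the collision to a tangent vector at $p$. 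Concretely, take an $S[\epsilon]$-point of $\mathcal M_2$ restricting to $(D_S,p,p)$. The two sections $s_1,s_2$ of the deformed disk agree modulo $\epsilon$. Their difference $s_2-s_1$ is therefore a derivation $\mathcal O_{D}\to\epsilon\mathcal O_S$ along $p$, i.e.\ a section of $p^*T_{D_S/S}$. This gives a map $N_{d_0}\to\mathcal T_{\mathrm{univ}}$.

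Next I would check in coordinates that this map is an isomorphism. On the standard model the difference is $s\,\partial_t$: the sections are $t=0$ and $t=s$. The normal direction $\partial_s$ therefore maps to $\partial_t$, which is a generator. For compatibility with the relation scheme $\mathcal R$, I would use the formula from the proof of Proposition~\ref{lem:coordinates}: $p_2^*(b-a)=b_0\,p_1^*(b-a)$. On the tangent side, a coordinate change $u=F(t)$ with $F(t)=a_0+b_0t+\dots$ acts on $\partial_t$ at the marked point by the same factor $F'(0)=b_0$ modulo $s$. These matching transition functions show that the identification is independent of charts and satisfies the cocycle condition. It therefore descends to a canonical isomorphism $N_{d_0}\simeq\mathcal T_{\mathrm{univ}}$ on $\mathcal M_1=B\operatorname{Aut}(D)$. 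This agrees with the intrinsic difference-of-sections description.

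Finally, $\mathcal T_{\mathrm{univ}}$ corresponds to the tangent character of $\operatorname{Aut}(D)$: an automorphism $t\mapsto b_0t+\cdots$ acts on $\partial_t|_0$ through $b_0^{\pm1}$. With the normalization of \cite[\S2.11]{LW}, which identifies $\hbar$ with $c_1$ of this character, we get $c_1(N_{d_0})=\hbar$.

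I expect the main obstacle to be bookkeeping rather than geometry. One has to check that the character is the tangent character and not its dual, i.e.\ fix the sign of $\hbar$ under the convention of \cite{LW}. It must also be justified that normal bundles and $c_1$ make sense for the pro-smooth, non-finite-type stack $\mathcal M_2$. For the latter I would argue on the finite-level quotients, using the presentation in Lemma~\ref{localcor} through which the tangent character factors.
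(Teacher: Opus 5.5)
Your proposal is correct and follows essentially the same route as the paper. Both identify the conormal line with the cotangent line at the marked point on the atlas via $\bar s\mapsto dt|_{t=0}$ (dually $\partial_s\mapsto\partial_t$), check compatibility using $p_2^*(b-a)=b_0\,p_1^*(b-a)$ and $F'(0)\equiv b_0 \pmod{s}$, descend, and read off $c_1=\hbar$ from the tangent character under the convention of \cite[\S2.11]{LW}. The differences are cosmetic: the paper works on the conormal/cotangent side and dualizes at the end, and your intrinsic difference-of-sections description is a nice extra that the descent argument does not need.
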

\begin{proof}
Let \(\mathcal I\) be the ideal of the collision divisor, and let
\(\sigma:\mathcal M_1\to\mathfrak D_{\mathrm{univ}}\) be as above. We first construct a
canonical identification
\begin{equation}\label{conormal}
    \mathcal I/\mathcal I^2
\simeq
\sigma^*\widehat\Omega^1_{\mathfrak D_{\mathrm{univ}}/\mathcal M_1}.
\end{equation}

Here \(\widehat\Omega^1\) denotes continuous relative differentials.

On the coordinate atlas as in the proof of Lemma~\ref{localcor}, write \(t=z-a\) and \(s=b-a\).
The collision divisor is defined by \(s=0\), and its conormal line
is generated by \(\bar s\). The cotangent line at the common marked
section is generated by \(dt|_{t=0}\). Define the local identification
by
\[
\bar s\longmapsto dt|_{t=0}.
\]

Under a marked coordinate change \(u=F(t)\), the difference of the
marked coordinates satisfies \(s'=b_0s\). On the collision locus,
$
F(t)=a_0+b_0t+t^2H(t),$ and $F'(0)=b_0.
$
Hence
$
\overline{s'}=F'(0)\bar s,$ and $
du|_{t=0}=F'(0)\,dt|_{t=0}.
$
Therefore, the local identifications are compatible with marked
coordinate changes and descend to the isomorphism \eqref{conormal}. Taking duals, we get
\[
N_{d_0}
\simeq
\bigl(\mathcal I/\mathcal I^2\bigr)^\vee
\simeq
\sigma^*T_{\mathfrak D_{\mathrm{univ}}/\mathcal M_1}.
\]

Under \(\mathcal M_1\simeq B\operatorname{Aut}(D)\) and the convention of \cite[\S2.11]{LW}, this tangent
bundle is classified by
$
B\chi:B\operatorname{Aut}(D)\longrightarrow B\mathbb G_m,
$
and therefore,  we have
$
N_{d_0}\simeq(B\chi)^*\mathcal O(-1),
$ and
$$
c_1(N_{d_0})
=(B\chi)^*c_1(\mathcal O(-1))
=\hbar.
$$
\end{proof}

The following lemma is straightforward by construction and definition.
\begin{lemma}
There is a natural map given by the formal completion at two sections
\[
\gamma_C:C^2\longrightarrow\mathcal M_2
\]
for which the square
\begin{equation}
\label{eq:curve-square}
\begin{tikzcd}[column sep=large,row sep=large]
C \arrow[r,"\operatorname{Coor}_C"] \arrow[d,"\Delta_C"']
  & \mathcal M_1 \arrow[d,"d_0"]\\
C^2 \arrow[r,"\gamma_C"']
  & \mathcal M_2
\end{tikzcd}
\end{equation}
is Cartesian, where \(\operatorname{Coor}_C\) classifies the formal
neighborhood of a point of \(C\), equipped with its marked section.
In particular,
\[
\operatorname{Coor}_C^*N_{d_0}\simeq N_{\Delta_C}\simeq T_C.
\]
\end{lemma}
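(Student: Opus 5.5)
We construct $\gamma_C$ directly, then verify the Cartesian property on the coordinate atlas of Proposition~\ref{lem:coordinates}.

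\textbf{Construction of $\gamma_C$.} Let $S\to C^2$ be a test scheme with points $c_1,c_2\in C(S)$, and let $\Gamma_1,\Gamma_2\subset C_S$ be their graphs. Each $\Gamma_i$ is a relative effective Cartier divisor, so $\Gamma_1+\Gamma_2$ is one as well. We set
\[
\gamma_C(c_1,c_2)=\bigl(\widehat{(C_S)}_{\Gamma_1+\Gamma_2};\,c_1,c_2\bigr),
\]
the formal completion of $C_S$ along $\Gamma_1+\Gamma_2$, marked by the two sections.

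To see that this object lies in $\mathcal M_2(S)$, we work Zariski locally on $C$, where there is an étale coordinate $z\colon U\to\mathbb A^1$. Writing $a=z(c_1)$ and $b=z(c_2)$, the completion of $U_S$ along $\Gamma_1+\Gamma_2$ is identified with $\operatorname{Spf} R[z]^\wedge_{(z-a)(z-b)}$. This holds because an étale map induces isomorphisms on completions along a closed subscheme mapping isomorphically onto its image. Strictly, $\Gamma_1\cup\Gamma_2$ need not lie in a single coordinate chart. We handle this by passing to an étale cover of $S$ on which both points land in a common affine open admitting an étale coordinate; since $\mathcal M_2$ is an étale stack, this suffices. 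Compatibility with base change in $S$ is clear, so $\gamma_C$ is a morphism of stacks. Composing with the coordinate chart, $\gamma_C$ restricted to $U^2$ factors as $U^2\xrightarrow{z\times z}\mathbb A^2\xrightarrow{\pi}\mathcal M_2$.

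\textbf{Commutativity and Cartesianness.} On the diagonal $c_1=c_2=c$, the completion along $2\Gamma_c$ equals the completion along $\Gamma_c$. This is exactly $\operatorname{Coor}_C(c)$ with its marked section repeated, i.e.\ $d_0\circ\operatorname{Coor}_C$. Hence the square commutes.

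For Cartesianness, recall from Proposition~\ref{lem:coordinates}(2) that $d_0$ is the closed immersion cut out by the descended ideal of $b-a$. Its base change along $\gamma_C$ is therefore the closed subscheme of $C^2$ cut out by $\gamma_C^*$ of that ideal. Locally this is the ideal generated by $z(c_2)-z(c_1)$, which cuts out the diagonal because $z$ is étale. So the fiber product $C^2\times_{\mathcal M_2}\mathcal M_1$ is identified with $\Delta_C$, and the induced map to $\mathcal M_1$ is $\operatorname{Coor}_C$ by the previous paragraph.

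The main obstacle is checking that the fiber product is scheme-theoretically $\Delta_C$, not merely the same closed subset. This is what the ideal computation above settles: since $d_0$ is representable with ideal generated by $b-a$ on the atlas, no extra nilpotent structure can appear in the fiber product.

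\textbf{Normal bundles.} For a Cartesian square of effective Cartier divisors, the pullback of the conormal bundle $\mathcal I/\mathcal I^2$ is the conormal bundle of the pullback divisor, because the pulled-back ideal is again invertible. Applying this to \eqref{eq:curve-square} gives $\operatorname{Coor}_C^*N_{d_0}\simeq N_{\Delta_C}$. Finally, $N_{\Delta_C}\simeq T_C$ is the standard identification. It is also compatible with Lemma~\ref{lem:normal}: the pullback of $\mathcal T_{\mathrm{univ}}$ along $\operatorname{Coor}_C$ is the tangent line of $C$ at the marked point, namely $T_C$.
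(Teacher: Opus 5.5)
\textbf{Gap: spurious collisions of the \'etale coordinate.} Your overall route is sound: define $\gamma_C$ by completing along $\Gamma_1+\Gamma_2$, pull back the ideal of the collision divisor, then pull back conormal lines. The paper itself gives no argument beyond ``straightforward by construction,'' and your normal-bundle paragraph is fine. The coordinate step, however, rests on a false claim. An \'etale coordinate $z\colon U\to\mathbb A^1$ is not injective in general. \'Etaleness only makes $\Delta_U$ open and closed in
\[
U\times_{\mathbb A^1}U=\Delta_U\sqcup W,
\]
and $W$ is usually nonempty. For example, take $z=x^2$ on $\mathbb G_m$ in odd characteristic and $(c_1,c_2)=(1,-1)$. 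For $g(C)\ge1$, $W$ is always nonempty geometrically, since otherwise $U$ would be rational.

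At a point of $W$ the two graphs are disjoint, so the completion of $U_S$ along $\Gamma_1+\Gamma_2$ is two disjoint disks with distinct marked points. By contrast, $\operatorname{Spf}R[z]^\wedge_{(z-a)(z-b)}$ with $a=b$ is a single disk on which the two sections coincide. This breaks three of your statements:
\begin{enumerate}
\item The identification of the completion with the standard model fails. Your hypothesis ``mapping isomorphically onto its image'' is violated, since two points map to one.
\item The factorization $\gamma_C|_{U^2}=\pi\circ(z\times z)$ is false on $W$.
\item The claim that $z(c_2)-z(c_1)$ cuts out the diagonal ``because $z$ is \'etale'' is false. It cuts out $\Delta_U\sqcup W$.
\end{enumerate}
Errors (2) and (3) happen to cancel, so your conclusion is correct, but the argument as written does not prove it.

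\textbf{How to repair it.} Restrict to the open $U^2\setminus W$, which is a neighborhood of $\Delta_U$.
\begin{itemize}
\item On $U^2\setminus W$, the map $\Gamma_1+\Gamma_2\to V((z-a)(z-b))$ is a morphism of finite flat degree-two $S$-schemes. It is an isomorphism on every fiber, hence an isomorphism.
\item Since $U_S\to\mathbb A^1_S$ is \'etale and separated, $\Gamma_1+\Gamma_2$ is then an open and closed piece of the preimage. So your completion identification and your ideal computation both go through there.
\item On $C^2\setminus\Delta_C$, the completion splits into two pointed disks, which are locally $\simeq D_{0,1,S}$ using separate coordinates. There $\gamma_C$ lands in the open complement $\mathcal M_1\times\mathcal M_1$ of $d_0$, so the fiber product is empty.
\end{itemize}

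\textbf{A coordinate-free alternative for Cartesianness.} The fiber product $S\times_{\mathcal M_2}\mathcal M_1$ is the equalizer of the two marked sections. On $D_{a,b,S}$ this equalizer is $V(b-a)$, because a continuous map out of $R[z]^\wedge_{(z-a)(z-b)}$ is determined by the image of $z$. Since $\widehat{(C_S)}\to C_S$ is a monomorphism, for $\gamma_C(c_1,c_2)$ the equalizer equals $(c_1,c_2)^{-1}(\Delta_C)$. This gives the Cartesian square directly.
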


\subsection{Purity}
\begin{lemma}[Regular support and base change]
\label{lem:support}
The inclusion \(i_2:P_2\hookrightarrow Y_2\) is a representable
closed immersion of finite presentation.
The formation of the unit object \(\delta_2=i_{2,*}k_{P_2}\) is compatible with base change. Namely, 
for any morphism \(g:B\to\mathcal M_2\), form 
the Cartesian squares
\[
\begin{tikzcd}
P_B \arrow[r,"i_B"] \arrow[d,"g_P"] & Y_B \arrow[d,"g_Y"] \arrow[r] & B \arrow[d,"g"]   \\
P_2 \arrow[r,"i_2"]           & Y_2 \arrow[r]           & \mathcal{M}_2
\end{tikzcd}
\]
and then we have a canonical isomorphism
\begin{equation}
\label{eq:support-base-change}
g_Y^*\delta_2\xrightarrow{\sim}\delta_B:=i_{B,\ast}k_{P_B}.
\end{equation}
These isomorphisms are compatible with further base change.
\end{lemma}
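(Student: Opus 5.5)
The plan is to reduce both claims to the standard atlas of Proposition~\ref{lem:coordinates}(1) and then descend. The two properties claimed for $i_2$ are representability and being a closed immersion of finite presentation. Both are local on $\mathcal M_2$ for the fpqc (indeed étale, via the pro-smooth surjective atlas) topology. So it suffices to check them after base change to a test scheme $S=\operatorname{Spec}R$ carrying a standard model $(D_{a,b,S},s_a,s_b)$.

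Over such an $S$, the description is explicit. The punctured multidisk $D^\times$ corresponds to the ring $R[z]^\wedge_{f}[f^{-1}]$ with $f=(z-a)(z-b)$, and $D$ itself corresponds to $R[z]^\wedge_f$. The fiber of $Y_2$ classifies a $G$-bundle $\mathcal E$ on $D$ and a section of $\mathcal E\times^G X$ over $D^\times$. To test whether such a section extends, I would work étale locally on the test scheme, where $\mathcal E$ can be trivialized: a $G$-bundle on $D$ is trivial étale locally on $S$, by smoothness of $G$ and formal completeness. After trivializing, the section becomes a point of $X(R'[z]^\wedge_f[f^{-1}])\subset V(R'[z]^\wedge_f[f^{-1}])$. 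Here we use the closed embedding $X\subset V$ and the fact that $V$ is a finite-dimensional representation.

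The extension condition is then checked coordinatewise. A vector in $V\otimes R'[z]^\wedge_f[f^{-1}]$ extends to $D$ iff its coordinates lie in $R'[z]^\wedge_f$. For a section of $X$, extension to $V$ already forces extension to $X$, since $X$ is closed in $V$ and the defining equations hold on the dense subring; I would argue this via injectivity of $R'[z]^\wedge_f\to R'[z]^\wedge_f[f^{-1}]$, using that $f$ is a nonzerodivisor because it is monic. Now use the expansion $\sum_n(a_n+b_nz)f^n$ for elements of the completion, as in the proof of Lemma~\ref{localcor}, so that a Laurent element is a sum over $n\ge -N$. The condition of lying in $R'[z]^\wedge_f$ is then the vanishing of finitely many coefficients $a_n,b_n$ for $-N\le n<0$. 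This is exactly the situation for $L^+X\subset LX$, presented as an ind-system of closed subschemes of finite presentation.

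The key steps are therefore:
\begin{enumerate}
\item Locally on $Y_2$, i.e.\ on a bounded-pole piece of the ind-presentation, $P_2$ is cut out by finitely many equations.
\item These equations are independent of the trivialization. This holds because the condition ``extends over $D$'' is intrinsic and invariant under $L^+G$-type gauge changes.
\end{enumerate}
Hence $i_2$ is a representable closed immersion of finite presentation after base change to the atlas, and by descent over $\mathcal M_2$.

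For the base change isomorphism \eqref{eq:support-base-change}, first note that the squares are Cartesian. This holds because the formation of $D$, $D^\times$ and the extension condition commutes with completed base change, as in the definition of pullback in $\mathcal M_2$. Then proper base change for the closed immersion $i_2$ gives
\[
g_Y^*i_{2,*}k_{P_2}\simeq i_{B,*}g_P^*k_{P_2}=i_{B,*}k_{P_B}.
\]
Closed immersions are proper, so $i_*=i_!$, and base change holds in the sheaf theory of \cite[\S4.1]{LW}, including for the ind/pro-type stacks in play because $i_2$ is finitely presented. Compatibility with further base change then follows from the standard compatibility of base change isomorphisms with composition of Cartesian squares.

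The main obstacle is that $Y_2$ is not of finite type: it is an ind-(pro) object, so "closed immersion of finite presentation" must be interpreted in the ind-sense. The point to check carefully is that, although the naive extension condition is infinitely many vanishing conditions, only finitely many are needed on each bounded-pole stratum, so that the condition is locally of finite presentation. I would address this first on the standard chart, using the $f$-adic expansion to bound poles, before descending.
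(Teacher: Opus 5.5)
Your proposal is correct and follows the same route as the paper. The paper asserts that $i_2$ is a finitely presented closed immersion \emph{mutatis mutandis} from the loop-space case $L^+X\subset LX$ (citing Pappas--Rapoport), then deduces \eqref{eq:support-base-change} by proper base change along the closed immersion $i_2$; your chart computation (trivializing $\mathcal E$, using $X\subset V$ and the injectivity of $A\to A[f^{-1}]$, and cutting out $P_2$ on each bounded-pole piece by finitely many $f$-adic coefficient conditions) makes explicit what the paper leaves to the reader.
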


\begin{proof}
The assertion about $i_2$ is proved \textit{mutatis mutandis} as in the usual loop space case $i_1:P_1\hookrightarrow Y_1$ (see, e.g., \cite[\S 1.a]{pappas2008twisted}).

The morphism \(i_2\) is proper and hence proper base change gives
\[
g_Y^*\delta_2
=g_Y^*i_{2,*}k_{P_2}
\simeq i_{B,*}g_P^*k_{P_2}
\simeq i_{B,*}k_{P_B}
=\delta_B.
\]
\end{proof}
\begin{remark}
    Take \(B=\mathcal M_1\) and \(g=d_0\), and we get
$
    i^*\delta_2\simeq\delta_1,
$
 where \(i:Y_1\to Y_2\) is induced by
\(d_0\). 
\end{remark}
\begin{remark}
Take \(B=C^2\) and \(g=\gamma_C\), and we get
$
\gamma_{C,Y}^*\delta_2\simeq\delta_{C^2},
$
where \(\gamma_{C,Y}:Y_{C^2}\to Y_2\) is the induced projection.
\end{remark}

\begin{lemma}[Purity]
\label{lem:purity}

There are canonical isomorphisms
\begin{equation}
\label{eq:purity}
i_P^!k_{P_2}\simeq k_{P_1}\langle-2\rangle,
\qquad
i^!\delta_2\simeq\delta_1\langle-2\rangle
\end{equation}
compatible with base change, where $i:Y_1\hookrightarrow Y_2$ and $i_P:P_1\hookrightarrow P_2$ are the inclusions of the collision loci.

Under these identifications, $i^\ast\mathrm{Gys}(-)$ is identified with \(\hbar\cdot (-)\), where for $\mathcal{F}\in\mathrm{Shv}(Y_2)$, the Gysin pushforward is defined to be
\begin{equation}
    \begin{split}
        \mathrm{Gys}: \mathrm{Hom}_{\mathrm{Shv}(Y_1)}(i^\ast\mathcal{F},\delta_1\langle-2\rangle)&\rightarrow \mathrm{Hom}_{\mathrm{Shv}(Y_2)}(\mathcal{F},\delta_2),\\
        (i^\ast\mathcal{F}\xrightarrow{c}\delta_1\langle-2\rangle)&
        \mapsto (\mathcal{F}\xrightarrow{\text{adj}}i_*i^*\mathcal{F}\xrightarrow{i_\ast c}i_\ast\delta_1\langle-2\rangle\xrightarrow{\mathrm{G}_i}\delta_2),
    \end{split}
\end{equation}
and
$\mathrm{G}_i:i_\ast\delta_1\langle-2\rangle\cong i_
\ast i^!\delta_2\rightarrow\delta_2$ by adjunction.
\end{lemma}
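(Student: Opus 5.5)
The plan is to reduce both purity isomorphisms in \eqref{eq:purity} to the fact that \(d_0:\mathcal M_1\hookrightarrow\mathcal M_2\) is an effective Cartier divisor (Proposition~\ref{lem:coordinates}(2)) along which the relevant spaces are pro-smooth.

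\textbf{Step 1: purity for \(P_2\).} First I show that \(P_2\to\mathcal M_2\) is pro-smooth. Fiberwise, \(P_2\) classifies a \(G\)-bundle on the multidisk together with an \(X\)-section on the whole multidisk \(D\). Over the atlas \(\pi:\mathbb A^2\to\mathcal M_2\), the multidisk is \(\operatorname{Spf}R[z]^\wedge_{f}\) with \(f=(z-a)(z-b)\) monic of degree \(2\). Trivializing the bundle, the positive-loop groups and arcs are inverse limits of jet schemes along \(f^{N+1}\). Division by \(f\), as in Lemma~\ref{localcor}, identifies the \(N\)-jets of \(X\) (respectively \(G\)) with a smooth scheme over \(\mathbb A^2\), namely a twisted form of \(X^{2N+2}\). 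Smoothness of \(X\) gives smoothness of these jets, and the transition maps are affine-space bundles. Hence \(P_2\to\mathcal M_2\) is pro-smooth with smooth finite-type layers. The inclusion \(i_P\) is the base change of \(d_0\), so by smooth base change for \(!\)-pullback along this pro-smooth morphism (which is valid because the transition maps have affine-space fibers, hence are cohomologically trivial), we get
\[
i_P^!k_{P_2}\simeq i_P^*k_{P_2}\otimes(\text{pullback of } d_0^!k_{\mathcal M_2}).
\]
Absolute purity for the regular Cartier divisor \(d_0\) then gives \(d_0^!k\simeq k\langle-2\rangle\). The canonical form of this isomorphism is the Thom class of \(N_{d_0}\).

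\textbf{Step 2: purity for \(\delta_2\).} Because \(i_2\) is a closed immersion (Lemma~\ref{lem:support}), base change of \(!\)-pullback against \(i_{2,*}=i_{2,!}\) through the Cartesian square formed by \(i_P\), \(i\) and \(i_1\), \(i_2\) gives
\[
i^!\delta_2\simeq i_{1,*}i_P^!k_{P_2}\simeq\delta_1\langle-2\rangle.
\]
Compatibility with base change follows because Thom classes of Cartier divisors pull back to Thom classes under transversal base change, and every \(g:B\to\mathcal M_2\) base changes \(P_2\) pro-smoothly.

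\textbf{Step 3: \(i^*\mathrm{Gys}=\hbar\).} Applying \(i^*\) to \(\mathrm{Gys}(c)\) gives \(c\) followed by \(i^*\mathrm{G}_i:\delta_1\langle-2\rangle\simeq i^*i_*i^!\delta_2\to i^*\delta_2\simeq\delta_1\). So it suffices to show that \(i^*\mathrm{G}_i\) is multiplication by the Euler class \(c_1(N_{d_0})\). This is the self-intersection formula: for a Cartier divisor, the composite \(i^*i_*i^!\to i^*\), applied to the constant sheaf, is cup product with \(e(N)\). Pulling this back along the pro-smooth map \(P_1\to\mathcal M_1\) and pushing forward by \(i_{1,*}\) identifies \(i^*\mathrm{G}_i\) with \(c_1(N_{d_0})\cdot\). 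Lemma~\ref{lem:normal} identifies this class with \(\hbar\).

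\textbf{Main obstacle.} I expect the difficult part to be making absolute purity and the self-intersection formula rigorous on the stack \(\mathcal M_2\) and the infinite-type \(P_2\). To handle this, I would work on the atlas \(\mathbb A^2\), where the divisor is \(\{a=b\}\) and everything is classical, at each finite jet level. I would then check descent along the relation scheme \(\mathcal R\) using \(p_2^*(b-a)=b_0\,p_1^*(b-a)\) with \(b_0\) invertible, which shows that the Thom classes agree, and pass to the limit along the affine-space transition maps.
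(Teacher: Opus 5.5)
Your three steps match the paper's: the pro-smooth presentation of $P_2$ (the paper's $\mathcal J(X)\to[\mathcal J(X)/\mathcal J(G)]\to P_2$), base change of $!$-pullback along it, proper base change along $i_2$ for $\delta_2$, and the self-intersection formula combined with Lemma~\ref{lem:normal}. The weak point is how you propose to get purity and self-intersection for $d_0$ on $\mathcal M_2$, which you rightly flag as the obstacle.

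Working on $\mathbb A^2$ and descending along $\mathcal R$ does suffice for the isomorphism $d_0^!k\simeq k\langle-2\rangle$. That object is a shifted rank-one local system, so an isomorphism built on $\mathbb A^2$ descends once the two pulled-back divisors on $\mathcal R$ coincide, which is what $p_2^*(b-a)=b_0\,p_1^*(b-a)$ gives.

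It does not work for Step 3. On the atlas, the divisor $\{a=b\}\subset\mathbb A^2$ has trivial normal bundle, and $\hbar$ pulls back to $0$ in $H^2(\mathbb A^1)$. So no identity checked on $\mathbb A^2$ or on its finite jet levels can detect the class $\hbar\in H^2(\mathcal M_1)$ you need. That class is visible only through the transition function $b_0$ on $\mathcal R$, i.e.\ in \v{C}ech degree one, and your plan does not set up that computation. Citing ``the self-intersection formula for a Cartier divisor'' on $\mathcal M_2$ is exactly the point that needs proof here.

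The missing idea is the divisor-classifying map $\rho:\mathcal M_2\to[\mathbb A^1/\mathbb G_m]$, under which $d_0$ is the pullback of the zero section $i_0:B\mathbb G_m\hookrightarrow[\mathbb A^1/\mathbb G_m]$. The composite $\mathcal J(X)\to\mathbb A^2\to\mathbb A^1$, $(a,b)\mapsto b-a$, is pro-smooth, so \cite[Proposition 4.10]{LW} applies to $P_2\to\mathcal M_2\to[\mathbb A^1/\mathbb G_m]$. Two things are then simply pulled back: the Thom isomorphism for $i_0$, and the universal identity ``$k\langle-2\rangle\simeq i_0^!k\to i_0^*k$ is $c_1(\mathcal L)$''. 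The class $c_1(\mathcal L)$ pulls back to $c_1(N_{d_0})=\hbar$. This way you never need purity on $\mathcal M_2$ itself, and canonicity and compatibility with base change come for free (e.g.\ along $\gamma_C$, since $\rho\circ\gamma_C$ classifies $\Delta_C$).

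A minor correction: the $N$-jets of $X$ over $\mathbb A^2$ form the Weil restriction of $X$ along the finite flat map $R\to R[z]/(f^{N+1})$. This is smooth, which is all you use, but it is not a twisted form of $X^{2N+2}$; over the diagonal it is the scheme of $(2N+1)$-jets at a single point.
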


\begin{proof}
If \(X=\varnothing\), the assertions are immediate. Assume henceforth
that \(X\ne\varnothing\).

The Cartier divisor of Proposition~\ref{lem:coordinates} has a divisor-classifying map
\begin{equation}
\label{eq:divisor-map}
\rho:\mathcal M_2\longrightarrow[\mathbb A^1/\mathbb G_m],
\end{equation}
and we have Cartesian squares
\begin{equation}
\begin{tikzcd}
P_1 \arrow[d, "i_P"] \arrow[r, "i_1"] & Y_1 \arrow[d, "i"] \arrow[r] & \mathcal M_1 \arrow[d, "d_0"] \arrow[r] & B\mathbb G_m \arrow[d, "i_0"]         \\
P_2 \arrow[r, "i_2"]                  & Y_2 \arrow[r]                & \mathcal M_2 \arrow[r]                  & {{[\mathbb A^1/\mathbb G_m]}      }
\end{tikzcd}
\end{equation}
We want to apply locally pro-smooth base change theorem
\cite[Proposition 4.10]{LW} to these diagrams. Therefore, we need to verify the assumptions of the cited proposition, i.e. we want to find a pro-smooth atlas $\mathcal{J}(X)\twoheadrightarrow P_2$ such that the composition $\mathcal{J}(X)\rightarrow P_2\rightarrow \mathcal{M}_2$ is pro-smooth.

\emph{(1) Constructing a pro-smooth cover.}
Define the $\mathbb{A}^2$-scheme 
$$\mathcal J(X)(R)=\{(a,b,v):a,b\in R, v\in X\bigl(R[t]^\wedge_{(t-a)(t-b)}\bigr)\},$$
for a test scheme $S=\mathrm{Spec}R\rightarrow \mathbb{A}^2$.
Then $\mathcal{J}(X)\rightarrow \mathbb{A}^2$ is pro-smooth surjective. Similarly, $\mathcal J(G)\rightarrow \mathbb{A}^2$ is pro-smooth.

Trivializing the \(G\)-bundle gives
\[
P_2\times_{\mathcal M_2}\mathbb{A}^2
\simeq[\mathcal J(X)/\mathcal J(G)].
\]
Then we have a representable pro-smooth surjection
\[
u:\mathcal J(X)\rightarrow [\mathcal J(X)/\mathcal J(G)]\cong P_2\times_{\mathcal M_2}\mathbb{A}^2\rightarrow P_2.
\]
Note that $\mathcal{J}(X)$ is an affine, hence quasi-compact quasi-separated, scheme. 
We have a commutative diagram
$$
\begin{tikzcd}
\mathbb{A}^2 \arrow[d, "\pi"] \arrow[r, "\widetilde{\rho}"] & \mathbb{A}^1 \arrow[d]        \\
\mathcal{M}_2 \arrow[r, "\rho"]                             & {[\mathbb{A}^1/\mathbb{G}_m]}
\end{tikzcd}
$$
where $\widetilde{\rho}(a,b)= b-a$ is a smooth morphism.
Therefore, we have a commutative diagram
\begin{equation}
    \begin{tikzcd}
\mathcal{J}(X) \arrow[dd, "u" {pos=0.25}, two heads] \arrow[r] & \mathbb{A}^2 \arrow[dd, "\pi" {pos=0.25}] \arrow[r, "\widetilde{\rho}"] & \mathbb{A}^1 \arrow[dd]              &  \\
& P_1 \arrow[ld, hook,"i_P"'] \arrow[r,"p_1"]  & \mathcal{M}_1 \arrow[ld, "d_0"', hook] \arrow[r, "\widetilde{\rho}"] & B\mathbb{G}_m \arrow[ld, "i_0", hook] \\
P_2 \arrow[r, "p_2"]  & \mathcal{M}_2 \arrow[r, "\rho"] & {[\mathbb{A}^1/\mathbb{G}_m]} &
\end{tikzcd}
\end{equation}
where the two bottom squares are Cartesian.
Thus the hypotheses of \cite[Proposition 4.10]{LW} are satisfied.

\smallskip
\emph{(2) Purity and proper base change.}
Let
\[
\alpha_z:k_{B\mathbb G_m}\langle-2\rangle
\xrightarrow{\sim}i_0^!k_{{[\mathbb{A}^1/\mathbb{G}_m]}}
\]
be the zero-section purity isomorphism. Since \(i_0\) is representable
of finite presentation, \cite[Proposition 4.10(1)]{LW} gives
\[
(\widetilde{\rho}\circ p_1)^*i_0^!k_{{[\mathbb{A}^1/\mathbb{G}_m]}}
\xrightarrow{\sim}i_P^!(\rho\circ p_2)^*k_{{[\mathbb{A}^1/\mathbb{G}_m]}}\cong i_P^! k_{P_2}.
\]
Apply \(\alpha_z\), and one proves the first formula in \eqref{eq:purity}:
\[
k_{P_1}\langle-2\rangle \xrightarrow{\sim}i_P^!k_{P_2}.
\]

The morphism \(i_2\) is proper by Lemma~\ref{lem:support}.
Extraordinary proper base change gives
\[
i^!\delta_2
=i^!i_{2,*}k_{P_2}
\simeq i_{1,*}i_P^!k_{P_2}
\simeq\delta_1\langle-2\rangle.
\]
This proves the second formula in \eqref{eq:purity}.

\smallskip
\emph{(3) Self-intersection.}
Let \(\mathcal L\) denote the universal line bundle on
\(B\mathbb G_m\). 
Then the self-intersection formula identifies the map
\[
k_{B\mathbb G_m}\langle-2\rangle
\xrightarrow{\alpha_z}i_0^!k_{{[\mathbb{A}^1/\mathbb{G}_m]}}
\longrightarrow i_0^*k_{{[\mathbb{A}^1/\mathbb{G}_m]}}
\simeq k_{B\mathbb G_m}
\]
with multiplication by \(c_1(\mathcal L)\).
Therefore, by Lemma~\ref{lem:normal}, one has that
\[
(\widetilde{\rho}\circ p_1)^*c_1(\mathcal L)
=p_1^*c_1(N_{d_0})
=p_1^*\hbar.
\]
\end{proof}
\begin{remark}
This is the universal-local version of the calculation with \(c_1(T_C)\) in \cite[Lemma 4.56]{LW}. The locally pro-smooth base change theorem \cite[Proposition 4.10]{LW} implies 
\begin{equation}
    \mathrm{Coor}_C^\ast \hbar = c_1(T_C).
\end{equation}
\end{remark}

\section{Universal-local Hecke Correspondences}
\label{sec:hecke}

\subsection{Partial Hecke stacks and convolution}
We have natural restriction maps for $i=1,2$:
\begin{equation}
    \begin{split}
        \mathrm{res}_i: \mathrm{Hk}_{i|2} &\rightarrow \mathrm{Hk}^{\text{loc}}_G \\
        (D,s_1,s_2,\varphi:\mathcal{E}_0|_{D_i^\times}\xrightarrow{\cong}\mathcal{E}_1|_{D_i^\times})&\mapsto (D,s_i,\varphi:\mathcal{E}_0|_{D_i^\times}\xrightarrow{\cong}\mathcal{E}_1|_{D_i^\times})
    \end{split}
\end{equation}
For $V\in\mathrm{Rep}(\check{G})^{\heartsuit,\omega}$, we pull back the Satake sheaf to obtain $\mathrm{IC}_{V,i}:=\mathrm{res}_i^*\mathrm{IC}_V$.

Recall the convolution Hecke stack from \S\ref{sec:multidisks}, the projection map $p_{i}:\widetilde{\mathrm{Hk}}_{ij}\rightarrow \mathrm{Hk}_{i|2}$ and the convolution map $m_{ij}: \widetilde{\mathrm{Hk}}_{ij}\rightarrow\mathrm{Hk}_2$. For sheaf $\mathcal{F}_i\in\mathrm{Shv}(\mathrm{Hk}_{i|2})$, we define
$$\mathcal{F}_i\star\mathcal{F}_j:= m_{ij,!}(p_i^\ast \mathcal{F}_i\otimes p_j^\ast \mathcal{F}_j).$$

For $V,W\in\mathrm{Rep}(\check{G})^{\heartsuit,\omega}$, define
\begin{equation}
\label{eq:ordered-kernels}
\begin{aligned}
K_{V,W}&= \operatorname{IC}_{V,1}\star\operatorname{IC}_{W,2},\\
K_{W,V}&= \operatorname{IC}_{W,2}\star\operatorname{IC}_{V,1}.
\end{aligned}
\end{equation}
\subsection{Universal-local fusion}
\begin{lemma}[Universal-local fusion comparison]
\label{lem:fusion}
We have the following.
\begin{enumerate}
    \item The canonical identification of \(K_{V,W}\) and \(K_{W,V}\) over the disjoint locus extends uniquely to an isomorphism
$
\eta_{V,W}:K_{V,W}\xrightarrow{\sim}K_{W,V}
$
on \(\mathrm{Hk}_2\).
\item  Its restriction to the collision locus is the local Satake
commutativity constraint. 
\item For every smooth curve \(C\), its pullback
along \(\gamma_C:C^2\to\mathcal M_2\) is the usual fusion comparison.
\end{enumerate}
\end{lemma}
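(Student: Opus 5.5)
The plan is to imitate the standard Beilinson--Drinfeld fusion argument, with the curve replaced by the stack \(\mathcal M_2\). The argument is an intermediate-extension (perversity) statement relative to the divisor \(d_0\).

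\textbf{Step 1: the disjoint locus.} Over the open complement \(\mathcal M_1\times\mathcal M_1\) of \(d_0\), the stack \(\mathrm{Hk}_2\) splits as a product \(\mathrm{Hk}^{\mathrm{loc}}_G\times\mathrm{Hk}^{\mathrm{loc}}_G\). This follows from the Chinese remainder splitting in Proposition~\ref{lem:coordinates}. Both \(m_{12}\) and \(m_{21}\) restrict to isomorphisms onto \(\mathrm{Hk}_2|_{\mathrm{disj}}\), since modifications at disjoint sections commute. This identifies both \(j^*K_{V,W}\) and \(j^*K_{W,V}\) with \(\mathrm{IC}_V\boxtimes\mathrm{IC}_W\), and gives the canonical isomorphism \(j^*K_{V,W}\simeq j^*K_{W,V}\).

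\textbf{Step 2: both kernels are intermediate extensions.} I will show that \(K_{V,W}\) and \(K_{W,V}\) are, up to the normalizing shift, perverse and equal to \(j_{!*}j^*\) of themselves. Fix the pro-smooth atlas \(\mathbb A^2\to\mathcal M_2\) of Proposition~\ref{lem:coordinates}. All statements are local for pro-smooth base change (\cite[Proposition 4.10]{LW}), so it suffices to work over \(\mathbb A^2\).

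Over \(\mathbb A^2\), \(\widetilde{\mathrm{Hk}}_{12}\) is a Beilinson--Drinfeld convolution Grassmannian modulo a positive loop group. The map \(m_{12}\) is ind-proper, and it is stratified small (semismall with the standard BD estimates) exactly as for \(C=\mathbb A^1\). The same holds for \(m_{21}\). Smallness implies \(m_{12,!}\) of the IC sheaf \(p_1^*\mathrm{IC}_{V,1}\otimes p_2^*\mathrm{IC}_{W,2}\) is the IC extension of its restriction to the open part. The key point is that the fibers over the collision divisor have dimension strictly less than half the codimension count. Equivalently, the restriction to the collision locus is the convolution \(\mathrm{IC}_V*\mathrm{IC}_W\), which lies in degrees that make \(i^*\) perverse after the shift by \(-1\). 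With Step 1, full faithfulness of \(j_{!*}\) on perverse objects (via \(\mathrm{Hom}(j_{!*}A,j_{!*}B)=\mathrm{Hom}(A,B)\)) yields existence and uniqueness of \(\eta_{V,W}\). Pro-smooth descent then glues the result back to \(\mathrm{Hk}_2\) and proves (1). Uniqueness is automatic, since any two extensions differ by an endomorphism trivial on the open part.

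\textbf{Step 3: restrictions.} For (3), pullback along \(\gamma_C\) is compatible with \(m_{ij}\) by base change. The pulled-back kernels are the BD convolution kernels on \(C^2\), again intermediate extensions from \(C^2-\Delta_C\), because \(\gamma_C\) is pro-smooth over \(\mathcal M_2\) locally (étale coordinates). So \(\gamma_C^*\eta_{V,W}\) is the unique extension of the disjoint-locus identification, which is by definition the usual fusion isomorphism. For (2), restricting along \(d_0\) recovers the local Satake commutativity constraint, since that constraint is defined exactly as the specialization of fusion to the diagonal. To compare, pull back along \(\mathrm{Coor}_{\mathbb A^1}\) using the Cartesian square \eqref{eq:curve-square} with \(C=\mathbb A^1\). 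This reduces (2) to (3) on \(\mathbb A^1\), together with the fact that \(\mathrm{Coor}_{\mathbb A^1}^*\) is faithful on the relevant Hom (the \(\mathrm{Aut}(D)\)-equivariant Satake category embeds into the constant-along-\(\mathbb A^1\) Satake category). The one care point is the sign/twist in the commutativity constraint from the \(\hbar\)-rotation, which should match the normalization of \cite[\S2.11]{LW}.

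\textbf{Main obstacle.} I expect the hardest step to be Step 2: establishing perversity and smallness over the stack \(\mathcal M_2\) rather than over a curve. The relevant notion is perversity relative to the pro-smooth atlas, with \(\mathrm{Aut}\)-equivariance. My first attempt would be to deduce everything from the \(\mathbb A^2\) chart, where the geometry is literally BD over \(\mathbb A^1\times\mathbb A^1\) (translated), and to check that \(j_{!*}\) commutes with pro-smooth pullback.
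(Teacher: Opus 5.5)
Your proposal is correct and is essentially the paper's argument. On the coordinate atlas $T=\mathbb A^2$ the kernels become Beilinson--Drinfeld convolution kernels that are intermediate extensions from the disjoint locus (the paper cites Zhu's Proposition 5.4.2 for what you re-derive via stratified smallness), so full faithfulness of $j_{!*}$ gives existence and uniqueness of $\eta_{V,W}$, and (2), (3) follow by base change together with the fact that the commutativity constraint is the diagonal specialization of fusion. The paper spells out descent more explicitly than you do: it checks $\theta_{W,V}\circ q_1^*\eta_T=q_2^*\eta_T\circ\theta_{V,W}$ on $R=T\times_{\mathcal M_2}T$ by passing to finite jet models on bounded Satake supports and applying full faithfulness of intermediate extension there, which is exactly the ``$j_{!*}$ commutes with pro-smooth pullback'' point you flag.
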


\begin{proof}
\textbf{Proof of (1).}
Let \(\pi: T=\mathbb A^2\to\mathcal M_2\) be the coordinate presentation
of Proposition~\ref{lem:coordinates}, and put
\(T^\circ=T\setminus\Delta\). We construct the comparison over \(T\) and then
descend it along $\pi$.

\emph{Fusion on the coordinate presentation.}
Put \(\mathcal H_T=T\times_{\mathcal M_2}\mathrm{Hk}_2\), and denote
the pulled-back kernels by \(K_{V,W,T}\) and \(K_{W,V,T}\).
Trivializing the input bundle gives the Beilinson--Drinfeld
Grassmannian \(\mathrm{Gr}_T\) and a morphism
\(e:\mathrm{Gr}_T\to\mathcal H_T\).
Set
\[
\mathcal A_{V,W}=e^*K_{V,W,T}[2],
\qquad
\mathcal A_{W,V}=e^*K_{W,V,T}[2].
\]
Let
\(j:\mathrm{Gr}_{T^\circ}\hookrightarrow\mathrm{Gr}_T\) be the
open immersion. Factorization identifies $j^*\mathcal A_{V,W}\cong j^*\mathcal A_{W,V}=:\mathcal{A}^\circ$.
By \cite[Proposition 5.4.2]{Zhu}, we have a unique equivariant isomorphism
\[
\mathcal A_{V,W}\simeq j_{!*}\mathcal A^\circ
\simeq\mathcal A_{W,V},
\]
which descends equivariantly along $e$ and gives an isomorphism

\[
\eta_T:K_{V,W,T}\xrightarrow{\sim}K_{W,V,T}.
\]

\emph{Descent under coordinate changes.}
Put \(R=T\times_{\mathcal M_2}T\), with projections \(q_1,q_2\). By abuse of notation, we also use $q_1,q_2$ to denote the base changed projections $\mathcal{H}_R:=R\times_{\mathcal{M}_2}\mathrm{Hk}_2\rightarrow \mathcal{H}_T$.
The kernels have canonical descent isomorphisms
\[
\theta_{V,W}:q_1^*K_{V,W,T}\xrightarrow{\sim}q_2^*K_{V,W,T},
\qquad
\theta_{W,V}:q_1^*K_{W,V,T}\xrightarrow{\sim}q_2^*K_{W,V,T},
\]
since the restriction and convolution diagrams are intrinsic.
To descend \(\eta_T\), it suffices to prove
\begin{equation}
\label{eq:fusion-descent}
\theta_{W,V}\circ q_1^*\eta_T
=
q_2^*\eta_T\circ\theta_{V,W}.
\end{equation}
Both sides agree over
\(R^\circ=q_1^{-1}(T^\circ)=q_2^{-1}(T^\circ)\).
However, on bounded Satake supports, the kernels and their descent data
descend to finite jet models. 
Then one again uses full faithfulness of intermediate extension to obtain \eqref{eq:fusion-descent} from its restriction to \(R^\circ\).

\textbf{Proof of (2).}
Proper base change identifies the restrictions of \(K_{V,W}\) and
\(K_{W,V}\) to the collision locus with
\(\operatorname{IC}_V\star\operatorname{IC}_W\) and
\(\operatorname{IC}_W\star\operatorname{IC}_V\), respectively.
Under these identifications, the restriction of \(\eta_{V,W}\)
is the commutativity constraint obtained from fusion
\cite[Corollary 5.4.11]{Zhu}.

\textbf{Proof of (3).} Part~(3) is proved similarly.
\end{proof}

\subsection{Universal-local special correspondences}
\subsubsection{Hecke stack with meromorphic $X$-sections}
Let \(\mathrm{Hk}_2^X\) be the Hecke stack with meromorphic
\(X\)-sections. An object consists of a formal multidisk \((D,s_1,s_2)\), two \(G\)-bundles
\(\mathcal E_0,\mathcal E_1\) on \(D\), an isomorphism
\[
\varphi:\mathcal E_0|_{D^\times}
\xrightarrow{\sim}\mathcal E_1|_{D^\times},
\]
and sections
\(\xi_\nu\in\Gamma(D^\times,\mathcal E_\nu\times^G X)\),
for \(\nu=0,1\), satisfying \(\varphi_*\xi_0=\xi_1\).
Define \(\mathrm{Hk}_{i|2}^X\) similarly, requiring the modification \(\varphi\) to be defined on \(D_i^\times\).

We have natural maps \[
\begin{tikzcd}[column sep=large,row sep=large]
Y_2
  & \mathrm{Hk}_2^X
      \arrow[l,"h_R"']
      \arrow[r,"h_L"]
      \arrow[d,"\pi"]
  & Y_2\\
  & \mathrm{Hk}_2
  &
\end{tikzcd}
\]
where
\(h_R,h_L\) remember
\((\mathcal E_0,\xi_0)\) and \((\mathcal E_1,\xi_1)\), respectively,
together with $(D,s_1,s_2)$, and the morphism \(\pi\) forgets the \(X\)-sections.
\subsubsection{Hecke action}
For \(K\in\mathrm{Shv}(\mathrm{Hk}_2)\) with bounded Schubert
support and \(\mathcal F\in\mathrm{Shv}(Y_2)\), define the Hecke action by
\begin{equation}
\label{eq:hecke-kernel-action}
K\boxstar \mathcal F
:=
h_{L,!}\bigl(\pi^*K\otimes h_R^*\mathcal F\bigr)
\in\mathrm{Shv}(Y_2).
\end{equation}
The pushforward is taken on the bounded support of \(\pi^*K\).
We also use \(\boxstar\) for the partial Hecke action defined by the analogous diagram for \(\mathrm{Hk}_{i|2}^X\).
\subsubsection{Universal-local special correspondences}
Using \(\eta_{V,W}\) in Lemma~\ref{lem:fusion}, we identify \(K_{W,V}\) with \(K_{V,W}\) and identify their induced actions on \(\delta_2\).

\begin{proposition}[Universal-local special correspondences]
\label{lem:spreading}
The local special correspondences \(c_V^{\text{loc}},c_W^{\text{loc}}\) canonically
determine 
\begin{equation}
\label{eq:ordered-maps}
c_{V,W}^{\text{univ-loc}},c_{W,V}^{\text{univ-loc}}\in
\operatorname{Hom}^0_{Y_2}
\bigl(K_{V,W}\boxstar\delta_2,
      \delta_2\langle d_V+d_W\rangle\bigr)
\end{equation}
with the following properties.
\begin{enumerate}
\item Their restrictions to the disjoint locus agree.
\item There is a canonical identification
\[
i^*(K_{V,W}\boxstar\delta_2)
  \simeq\operatorname{IC}_{V\otimes W}*\delta_1.
\]
Under this identification, \(i^*c_{V,W}^{\text{univ-loc}}\) and \(i^*c_{W,V}^{\text{univ-loc}}\) are identified with \(c_V^{\text{loc}}c_W^{\text{loc}}\) and
\(c_W^{\text{loc}}c_V^{\text{loc}}\), respectively. 
\item Pullback along \(\gamma_C:C^2\to\mathcal M_2\) gives the two
semi-local special correspondences $c_{V\boxtimes W,\{1,2\}}^{\text{semi-loc}}$ and $c_{W\boxtimes V,\{1,2\}}^{\text{semi-loc}}$ defined in \cite[equation (4.35)]{LW}.
\end{enumerate}
\end{proposition}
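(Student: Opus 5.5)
The construction goes by composing partial special correspondences, one leg at a time. I would first build, for each $i=1,2$ and each $V$, a \emph{partial} universal-local special correspondence
\[
c_{V,i}^{\text{univ-loc}}\in\operatorname{Hom}^0_{Y_2}\bigl(\operatorname{IC}_{V,i}\boxstar\delta_2,\delta_2\langle d_V\rangle\bigr).
\]
For this, consider the partial Hecke stack $\mathrm{Hk}^X_{i|2}$ and the map $\mathcal M_2\to\mathcal M_1$ remembering $(D_i^\wedge,s_i)$, the completion at the $i$-th section. The restriction $\mathrm{res}_i$ lifts to a map of Hecke data. The key geometric claim is that, relative to this, the pair $(\operatorname{IC}_{V,i}\boxstar\delta_2,\delta_2)$ is obtained from $(\operatorname{IC}_V*\delta_1,\delta_1)$ by pullback. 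The reason is that the partial modification at $s_i$ only involves the $X$-section on the formal neighbourhood of $s_i$, while the integrality condition away from $s_i$ is unchanged. Over $\mathcal M_1\times\mathcal M_1$ this is a product decomposition; in general it should be checked on the atlas $\mathcal J(X)$ of Lemma~\ref{lem:purity} by a Beauville--Laszlo type gluing. Granting this, base change (proper base change for $h_L$ on bounded Schubert support, together with Lemma~\ref{lem:support}) lets me define $c_{V,i}^{\text{univ-loc}}$ as the pullback of $c_V^{\text{loc}}$, via \eqref{eq:local-hom}.

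Next I would compose. Since $\widetilde{\mathrm{Hk}}_{12}=\mathrm{Hk}_{1|2}\times_{\mathcal B_2}\mathrm{Hk}_{2|2}$ and $m_{12}$ has the expected $X$-version, the projection formula and base change give associativity
\[
K_{V,W}\boxstar\delta_2\simeq \operatorname{IC}_{V,1}\boxstar(\operatorname{IC}_{W,2}\boxstar\delta_2).
\]
This identification is compatible with the fact that $\varphi_*\xi_0=\xi_1$ determines the intermediate section. I then set
\[
c_{V,W}^{\text{univ-loc}}=c_{V,1}^{\text{univ-loc}}\circ\bigl(\operatorname{IC}_{V,1}\boxstar c_{W,2}^{\text{univ-loc}}\bigr).
\]
Symmetrically, $c_{W,V}^{\text{univ-loc}}$ is defined on $K_{W,V}\boxstar\delta_2$ and transported to $K_{V,W}\boxstar\delta_2$ via $\eta_{V,W}\boxstar\delta_2$ from Lemma~\ref{lem:fusion}.

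Properties (1)--(3) then follow from these constructions.

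(1) Over the disjoint locus $\mathcal M_1\times\mathcal M_1$ of Proposition~\ref{lem:coordinates}, everything splits as an external product: $Y_2$ restricts to $Y_1\times Y_1$, $\delta_2$ to $\delta_1\boxtimes\delta_1$, and the two partial actions act on separate factors. Both composites are then $c_V^{\text{loc}}\boxtimes c_W^{\text{loc}}$. The two orders of composition agree by the interchange law for external products, and $\eta_{V,W}$ is by definition the canonical identification there.

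(2) I would restrict along $i$ using proper base change and $i^*\delta_2\simeq\delta_1$. On the collision locus both partial Hecke stacks become $\mathrm{Hk}^{\text{loc}}_G$, and the composition becomes local convolution, so $i^*c_{V,W}^{\text{univ-loc}}=c_V^{\text{loc}}c_W^{\text{loc}}$. For the other order, Lemma~\ref{lem:fusion}(2) says $i^*\eta_{V,W}$ is the Satake commutativity constraint, which is exactly the identification implicit in the product $c_W^{\text{loc}}c_V^{\text{loc}}$ inside $\PL_{X,\hbar}$.

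(3) I would pull back along $\gamma_C$ and use Lemma~\ref{lem:fusion}(3). The semi-local correspondences of \cite[(4.35)]{LW} are built by the same recipe of spreading out the local ones along the coordinate map and composing, so they match.

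The main obstacle is the first step: the pullback description of the partial action, i.e.\ identifying $\operatorname{IC}_{V,i}\boxstar\delta_2$ and $\delta_2$ as pullbacks from the one-disk situation over all of $\mathcal M_2$, including the collision locus, compatibly with $\operatorname{Aut}$-equivariance, so that $c_V^{\text{loc}}$ genuinely spreads out. I would try to prove it first on the coordinate presentation, using the relation scheme of Proposition~\ref{lem:coordinates} and the locally pro-smooth base change \cite[Proposition 4.10]{LW}, and then descend. A secondary point requiring care is checking that the two sides of the descent datum match, as in the proof of Lemma~\ref{lem:fusion}.
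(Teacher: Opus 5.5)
Your overall architecture matches the paper's. You build a single-leg correspondence for each marked section, compose in both orders via the iterated partial actions, and transport one composite along $\eta_{V,W}$. You then read off (1)--(3) on the disjoint locus, the collision locus and along $\gamma_C$.

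The gap is in the step you single out, and that step is misformulated, not just unproved. There is no morphism $Y_2\to Y_1$ over $\mathcal M_2\to\mathcal M_1$ along which $\delta_2$ or $\operatorname{IC}_{V,i}\boxstar\delta_2$ could be a pullback. A point of $Y_2$ carries an $X$-section over $D^\times=\operatorname{Spec}A[1/f]$ with $f=\tau_1\tau_2$. Such a section need not restrict to $R(\!(\tau_i)\!)$, because $\tau_j$ need not be a unit there. For example, $t-s$ is not invertible in $k[s](\!(t)\!)$, which is exactly the situation on the atlas $\mathbb A^2$. Even on the disjoint locus, where projections $Y_1\times Y_1\to Y_1$ do exist, one has $\delta_2=\delta_1\boxtimes\delta_1$ and (for $i=1$) $\operatorname{IC}_{V,1}\boxstar\delta_2=(\operatorname{IC}_V*\delta_1)\boxtimes\delta_1$. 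These are external products, not pullbacks from one factor. So checking a pullback description on the coordinate presentation and descending by pro-smooth base change targets the base $\mathcal M_2$. The real obstruction is in the fibres: meromorphic sections do not localize to one marked section.

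The missing idea is to move the comparison to a locus where restriction to the $i$-th formal disk is defined. The paper uses that $\delta_2=i_{2,*}k_{P_2}$ with $i_2$ a closed immersion. Adjunction and proper base change then give
$\operatorname{Hom}^0_{Y_2}(\operatorname{IC}_{U,i}\boxstar\delta_2,\delta_2\langle n\rangle)\simeq\operatorname{Hom}^0_{P_2}(p_{i,!}K_i,k_{P_2}\langle n\rangle)$.
Here $\mathcal R_i=P_2\times_{Y_2,h_L}\mathrm{Hk}^X_{i|2}\times_{h_R,Y_2}P_2$ parametrizes modifications at $s_i$ with both sections regular. On regular sections the restriction $r_i:P_2\to P_1$ exists. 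Beauville--Laszlo gluing along $\tau$, using $\widehat A_{(\tau)}\simeq R[\![\tau]\!]$ and $X\subset V$, shows $\mathcal R_i\simeq P_2\times_{P_1}\mathcal R_{\mathrm{loc}}$. This Cartesian square is the precise form of your remark that integrality away from $s_i$ is unchanged. Since $p_{\mathrm{loc}}$ is proper on the Satake support, proper base change gives $p_{i,!}K_i\simeq r_i^*p_{\mathrm{loc},!}K_{\mathrm{loc}}$. One then defines the leg-$i$ correspondence by pulling back $\bar c$ along $r_i$ and undoing the adjunction.

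Your appeal to the atlas $\mathcal J(X)$, which only sees regular sections, already points this way. Alternatively, you could repair your version by restricting to the ind-closed substack of $Y_2$ where the section is meromorphic only along $s_i$. That substack contains the supports of both objects and does map to $Y_1$, but some such restriction is unavoidable. The same adjunction is also what makes (3) work: the operations of \cite[(4.29)--(4.30)]{LW} are exactly pullback of $\bar c$ and the inverse adjunction.
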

\begin{proof}
We first extend a local correspondence to either marked section
and then compose the resulting correspondences.

\emph{Step 1: a Cartesian square of regular loci.}
Fix \(i\in\{1,2\}\). Let
\(h_L,h_R:\mathrm{Hk}_{i|2}^X\to Y_2\) be the output and input maps as before and define $\mathcal R_i :=P_2\times_{Y_2,h_L}\mathrm{Hk}_{i|2}^X \times_{h_R,Y_2}P_2,$ which parametrizes partial modifications for which both \(X\)-sections are regular. 

We have the natural map $p_i:\mathcal R_i\longrightarrow P_2,$
which remembers the output bundle and its regular $X$-section. Define
\(p_{\mathrm{loc}}:\mathcal R_{\mathrm{loc}}\to P_1\)
analogously for the usual local Hecke stack.

Restriction to the formal completion along the \(i\)-th section
defines the horizontal maps in the square
\begin{equation}
\label{eq:patching-square}
\begin{tikzcd}[column sep=large,row sep=large]
\mathcal R_i \arrow[r,"q_i"] \arrow[d,"p_i"']
  & \mathcal R_{\mathrm{loc}} \arrow[d,"p_{\mathrm{loc}}"]\\
P_2 \arrow[r,"r_i"']
  & P_1.
\end{tikzcd}
\end{equation}
We claim that this square is Cartesian.

The claim is local on the base. In a standard coordinate
presentation, write
$
A=R[t]^\wedge_f$, $ f=t(t-s),
$
as in the proof of Lemma~\ref{localcor}
and let \(\tau=t\) or \(t-s\) define the selected section. For every
\(m\geq1\), we have $
A/(\tau^m)
\simeq R[t]/(\tau^m)
\simeq R[\tau]/(\tau^m),
$ and hence $
\widehat A_{(\tau)}\simeq R[[\tau]].$

Beauville--Laszlo gluing
(see \cite[Theorem 1.4.3 and proof of Corollary 1.4.5]{Zhu})
therefore identifies \(G\)-bundles over \(A\) with compatible
\(G\)-bundles over \(A[1/\tau]\) and \(R[[\tau]]\), glued over
\(R((\tau))\). This equivalence also holds for bundles equipped
with regular \(X\)-sections: using a \(G\)-equivariant closed
embedding of \(X\) into a representation (assumed in \S\ref{notations1}), one glues the associated
vector-bundle sections and imposes the defining equations of \(X\). Full faithfulness and functoriality of gluing
show that \eqref{eq:patching-square} is Cartesian, compatibly
with base change.

\emph{Step 2: extension of a local correspondence.}
Fix a representation \(U\), and let \(K_{\mathrm{loc}}\) and
\(K_i\) be the pullbacks of the Satake sheaf $\mathrm{IC}_U$ to
\(\mathcal R_{\mathrm{loc}}\) and \(\mathcal R_i\), respectively.
By construction,
\[
K_i\simeq q_i^*K_{\mathrm{loc}}.
\]
We work throughout on the Satake support of \(U\).
On this support, \(p_{\mathrm{loc}}\) is proper of finite
presentation, and hence \(p_i\) is proper of finite presentation by
\eqref{eq:patching-square}.

By adjunction and proper base change, we have that
for every \(n\in\mathbb Z\),
\begin{equation}
\label{eq:adjunction}
\begin{aligned}
\operatorname{Hom}^0_{Y_1}
 \bigl(\operatorname{IC}_U*\delta_1,\delta_1\langle n\rangle\bigr)
&\simeq
\operatorname{Hom}^0_{P_1}
 \bigl(p_{\mathrm{loc},!}K_{\mathrm{loc}},
       k_{P_1}\langle n\rangle\bigr),\\
\operatorname{Hom}^0_{Y_2}
 \bigl(\mathrm{IC}_{U,i}\boxstar\delta_2,\delta_2\langle n\rangle\bigr)
&\simeq
\operatorname{Hom}^0_{P_2}
 \bigl(p_{i,!}K_i,k_{P_2}\langle n\rangle\bigr).
\end{aligned}
\end{equation}

Proper base change in \eqref{eq:patching-square} gives
\begin{equation}
\label{eq:endpoint-base-change}
p_{i,!}K_i
\simeq r_i^*p_{\mathrm{loc},!}K_{\mathrm{loc}}.
\end{equation}
Given a local correspondence
\[
c:\operatorname{IC}_U*\delta_1
\longrightarrow\delta_1\langle n\rangle,
\]
let
\(\bar c:p_{\mathrm{loc},!}K_{\mathrm{loc}}
\to k_{P_1}\langle n\rangle\)
be its image under the first identification in
\eqref{eq:adjunction}. Pulling back \(\bar c\) defines
\[
p_{i,!}K_i
\xrightarrow{\sim}
r_i^*p_{\mathrm{loc},!}K_{\mathrm{loc}}
\xrightarrow{\,r_i^*\bar c\,}
k_{P_2}\langle n\rangle.
\]
Under the second identification in \eqref{eq:adjunction},
this corresponds to a canonical morphism
$$
c_{U,i}:\mathrm{IC}_{U,i}\boxstar\delta_2
\rightarrow\delta_2\langle n\rangle.
$$

\emph{Step 3: Construction of $c_{V,W}^{\text{univ-loc}}$ and $c_{W,V}^{\text{univ-loc}}$.}
Apply Step~2 to \(c_V^{\text{loc}}\) at leg \(1\) and
\(c_W^{\text{loc}}\) at leg \(2\) to obtain $c_{V,1}: \mathrm{IC}_{V,1}\boxstar\delta_2\rightarrow\delta_2\langle d_V\rangle$ and $c_{W,2}:\mathrm{IC}_{W,2}\boxstar\delta_2\rightarrow\delta_2\langle d_W\rangle$. By proper base change and
the projection formula, we have that 
$$K_{V,W}\boxstar\mathcal{F}\cong \mathrm{IC}_{W,2}\boxstar(\mathrm{IC}_{V,1}\boxstar\mathcal{F})$$
and
$$K_{W,V}\boxstar\mathcal{F}\cong \mathrm{IC}_{V,1}\boxstar(\mathrm{IC}_{W,2}\boxstar\mathcal{F}).$$
Identifying $K_{V,W}\cong K_{W,V}$ by \(\eta_{V,W}\), we obtain the morphisms
$
c_{V,W}^{\text{univ-loc}}, c_{W,V}^{\text{univ-loc}}
$
by composing $c_{V,1}$ and $c_{W,2}$ in the two orders.

Then (1) is straightforward by our construction.

On the collision locus, completion along either marked section
is the entire one-point formal disk. Part~(2) follows by construction and proper base change.

Finally, restriction to the selected formal completion, gluing, and the base-change isomorphisms above are compatible with pullback along \(\gamma_C\). Therefore, under
\eqref{eq:adjunction}, the restriction and extension operations
of \cite[equations (4.29)--(4.30)]{LW} are precisely the pullback
of \(\bar c\) and the inverse adjunction used in Step~2.
Thus the single-leg correspondences specialize to those
of \cite[\S4.6]{LW}. Their ordered compositions are the semi-local
correspondences of \cite[equation (4.35)]{LW}, proving~(3).
\end{proof}

\section{The Universal-local Commutator Identity}
\label{sec:universal}
In this section, we state and prove Main Theorem~\ref{prop:universal}.
\subsection{Gysin map}
Recall that, using the identification \(i^*\bigl(K_{V,W}\boxstar \delta_2\bigr)\simeq\operatorname{IC}_{V\otimes W}*\delta_1\),
purity gives the Gysin map
\begin{equation}
\label{eq:gysin}
\begin{aligned}
\mathrm{Gys}:\ &\operatorname{Hom}^0_{Y_1}
 (\operatorname{IC}_{V\otimes W}*\delta_1,\delta_1\langle d_V+d_W-2\rangle)\\
&\qquad\longrightarrow
\operatorname{Hom}^0_{Y_2}
 (K_{V,W}\boxstar \delta_2,\delta_2\langle d_V+d_W\rangle),\\
u\longmapsto\ &\Bigl(
K_{V,W}\boxstar \delta_2\longrightarrow i_*i^*\bigl(K_{V,W}\boxstar \delta_2\bigr)\\
&\qquad\xrightarrow{i_*u}
i_*i^!\delta_2\langle d_V+d_W\rangle
\longrightarrow\delta_2\langle d_V+d_W\rangle
\Bigr).
\end{aligned}
\end{equation}
By Lemma~\ref{lem:purity}, we have that
\begin{equation}
\label{eq:self-intersection}
i^*\mathrm{Gys}(u)=\hbar u.
\end{equation}

\begin{maintheoremB}[Universal-local commutator identity]
\label{prop:universal}

Under Assumption~\ref{assumption4.43} we have that
\begin{equation}
\label{eq:universal}
c_{V,W}^{\text{univ-loc}}-c_{W,V}^{\text{univ-loc}}=\mathrm{Gys}(c_{V\otimes W}^{\text{loc}}),
\end{equation}
where $c_{V,W}^{\text{univ-loc}}$ and $c_{W,V}^{\text{univ-loc}}$ are defined by Proposition~\ref{lem:spreading}.
\end{maintheoremB}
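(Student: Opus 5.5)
Set $\epsilon:=c_{V,W}^{\text{univ-loc}}-c_{W,V}^{\text{univ-loc}}-\mathrm{Gys}(c_{V\otimes W}^{\text{loc}})$, an element of $\operatorname{Hom}^0_{Y_2}(K_{V,W}\boxstar\delta_2,\delta_2\langle d_V+d_W\rangle)$; the plan is to show $\epsilon=0$ by excision along the stratification \eqref{eq:strata}, using purity on $\mathcal M_2$ in place of the global Chern class $c_1(T_C)$.

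\textbf{Step 1: vanishing on the disjoint locus.} We have $j^*\mathrm{Gys}(u)=0$ for every $u$, since $\mathrm{Gys}(u)$ factors through $i_*i^*(K_{V,W}\boxstar\delta_2)$ and $j^*i_*=0$. Together with Proposition~\ref{lem:spreading}(1), this gives $j^*\epsilon=0$. Write $\mathcal F=K_{V,W}\boxstar\delta_2$ and $\mathcal G=\delta_2\langle d_V+d_W\rangle$, and apply $\operatorname{Hom}(-,\mathcal G)$ to the triangle $j_!j^*\mathcal F\to\mathcal F\to i_*i^*\mathcal F$. The resulting exact sequence shows that $\epsilon$ comes from some
\[
\epsilon'\in\operatorname{Hom}^0(i_*i^*\mathcal F,\mathcal G)=\operatorname{Hom}^0_{Y_1}(i^*\mathcal F,i^!\mathcal G).
\]
By Lemma~\ref{lem:purity} and Proposition~\ref{lem:spreading}(2), this last group is $\operatorname{Hom}^0_{Y_1}(\operatorname{IC}_{V\otimes W}*\delta_1,\delta_1\langle d_V+d_W-2\rangle)$, and the map $\epsilon'\mapsto\epsilon$ is exactly $\mathrm{Gys}$. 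So $\epsilon=\mathrm{Gys}(\epsilon_0)$ for some $\epsilon_0$ in this group. The choice of $\epsilon_0$ is ambiguous only up to the image of $\operatorname{Hom}^{-1}(j_!j^*\mathcal F,\mathcal G)$; this causes no trouble, because we will prove $\mathrm{Gys}(\epsilon_0)=0$ directly.

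\textbf{Step 2: the collision locus.} Restricting to the collision locus, Proposition~\ref{lem:spreading}(2) and \eqref{eq:self-intersection} give
\[
i^*\epsilon=c_V^{\text{loc}}c_W^{\text{loc}}-c_W^{\text{loc}}c_V^{\text{loc}}-\hbar\,c_{V\otimes W}^{\text{loc}}.
\]
This vanishes by Assumption~\ref{assumption4.43}\eqref{eq:local-bracket}, once we check that $i^*c_{V,W}^{\text{univ-loc}}$ and $i^*c_{W,V}^{\text{univ-loc}}$ are both computed using the commutativity constraint of Lemma~\ref{lem:fusion}(2). Otherwise the two products would live on different kernels. On the other hand, \eqref{eq:self-intersection} applied to $\epsilon_0$ gives $i^*\epsilon=i^*\mathrm{Gys}(\epsilon_0)=\hbar\epsilon_0$. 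Translating through \eqref{eq:local-hom} into $\operatorname{Hom}_{\check G}(V\otimes W\langle\cdot\rangle,\PL_{X,\hbar})$, Assumption~\ref{assumption4.43}\eqref{eq:injectivity} then forces $\epsilon_0=0$, and hence $\epsilon=\mathrm{Gys}(\epsilon_0)=0$.

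\textbf{Main obstacle.} The delicate point is the identification, in Step~2, of $i^*\circ\mathrm{Gys}$ with multiplication by $\hbar$ on the $\PL_{X,\hbar}$ side. Lemma~\ref{lem:purity} gives this on $\delta_2$, via the self-intersection class $c_1(N_{d_0})=\hbar$ of Lemma~\ref{lem:normal}. We must also check that it is compatible with the identification $i^*(K_{V,W}\boxstar\delta_2)\simeq\operatorname{IC}_{V\otimes W}*\delta_1$ and with the adjunction \eqref{eq:local-hom}. This means verifying that the purity isomorphism $i^!\delta_2\simeq\delta_1\langle-2\rangle$ commutes with the Hecke action. I would argue through the pro-smooth atlas $\mathcal J(X)$ and the map $\rho$ to $[\mathbb A^1/\mathbb G_m]$: the class $\hbar$ is pulled back from $\mathcal M_1$, hence central and compatible with all the base changes in the proof of Proposition~\ref{lem:spreading}. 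This is precisely where the universal-local stage replaces $c_1(T_C)$ by $\hbar$, so no injectivity of $c_1(T_C)$ is needed.
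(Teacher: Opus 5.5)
Your proposal is correct and follows the paper's proof essentially step for step. The paper likewise gets $j^*\epsilon=0$ from Proposition~\ref{lem:spreading}(1) and the support of the Gysin term, lifts $\epsilon$ through the excision triangle to write $\epsilon=\mathrm{Gys}(\epsilon_0)$, and then combines $i^*\epsilon=[c_V^{\text{loc}},c_W^{\text{loc}}]-\hbar c_{V\otimes W}^{\text{loc}}=0$ with $i^*\mathrm{Gys}(\epsilon_0)=\hbar\epsilon_0$ and the injectivity in Assumption~\ref{assumption4.43} to conclude $\epsilon_0=0$; the compatibility points you flag (fusion identification of the kernels on the collision locus, purity commuting with the Hecke action) are exactly what the paper packages into Proposition~\ref{lem:spreading}(2) and Lemma~\ref{lem:purity}.
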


\begin{proof}
Define \(\epsilon^{\text{univ-loc}}=c_{V,W}^{\text{univ-loc}}-c_{W,V}^{\text{univ-loc}}-\mathrm{Gys}(c_{V\otimes W}^{\text{loc}})\).
By Proposition~\ref{lem:spreading} and the support of the Gysin term, \(j^*\epsilon^{\text{univ-loc}}=0\).
By Assumption~\ref{assumption4.43}\eqref{eq:local-bracket}, Proposition~\ref{lem:spreading}, and \eqref{eq:self-intersection},
\begin{equation}
\label{eq:restriction}
i^*\epsilon^{\text{univ-loc}}=[c_V^{\text{loc}},c_W^{\text{loc}}]-\hbar c_{V\otimes W}^{\text{loc}}=0.
\end{equation}

We have the excision triangle
\[
j_!j^*\bigl(K_{V,W}\boxstar \delta_2\bigr)
\longrightarrow K_{V,W}\boxstar \delta_2
\longrightarrow i_*i^*\bigl(K_{V,W}\boxstar \delta_2\bigr)
\]
by \cite[Proposition 4.12]{LW}. Apply \(\operatorname{Hom}(-,\delta_2\langle d_V+d_W\rangle)\),  by exactness on degree-zero morphisms and \(j^*\epsilon^{\text{univ-loc}}=0\), one obtains
\begin{equation}
\label{eq:excision-lift}
\epsilon_0\in\operatorname{Hom}^0_{Y_1}
 (\operatorname{IC}_{V\otimes W}*\delta_1,\delta_1\langle d_V+d_W-2\rangle)
\end{equation}
such that $$\epsilon^{\text{univ-loc}}=\mathrm{Gys}(\epsilon_0).$$
Equations \eqref{eq:self-intersection} and \eqref{eq:restriction} imply \(\hbar \epsilon_0=0\). By Assumption~\ref{assumption4.43}\eqref{eq:injectivity}, one has $\epsilon_0=0$, and hence $\epsilon^{\text{univ-loc}}=0$.
\end{proof}

\section{Proof of Main Theorem~\ref{thm:main}}
\label{sec:global}
In this section, we prove Main Theorem~\ref{thm:main}.
\subsection{Reduction to the case $|I|=2$}
The reduction from the general case to the two-leg case is carried out in \cite[\S4.6.1]{LW}, without additional assumptions. It follows from the functorialities of cohomological correspondences: compatibility of pushforward with composition and of pullback with composition.

\subsection{From universal-local to semi-local}

Fix a smooth curve \(C\). For \(j=1,2\), define
\[
Y_{C^j}:=C^j\times_{\mathcal M_j}Y_j,
\qquad
P_{C^j}:=C^j\times_{\mathcal M_j}P_j,
\]
using \(\operatorname{Coor}_C:C\to\mathcal M_1\) for \(j=1\)
and \(\gamma_C:C^2\to\mathcal M_2\) for \(j=2\).
These are the semi-local moduli of meromorphic and regular
\(X\)-sections, respectively. Write
\[
g_j:Y_{C^j}\to Y_j,\qquad
g_{P,j}:P_{C^j}\to P_j
\]
for the projections. Let \(i_{j,C}:P_{C^j}\hookrightarrow Y_{C^j}\)
be the base change of \(i_j\), and set
\(\delta_{C^j}:=i_{j,C,*}k_{P_{C^j}}\).
Lemma~\ref{lem:support} gives canonical identifications
\[
g_j^*\delta_j\simeq\delta_{C^j}.
\]

The Cartesian square \eqref{eq:curve-square} induces a Cartesian
square
\begin{equation}
\label{eq:curve-collision-Y}
\begin{tikzcd}[column sep=large,row sep=large]
Y_C \arrow[r,"g_1"] \arrow[d,"\Delta_Y"']
  & Y_1 \arrow[d,"i"]\\
Y_{C^2} \arrow[r,"g_2"']
  & Y_2,
\end{tikzcd}
\end{equation}
where \(\Delta_Y\) is induced by the diagonal
\(\Delta_C:C\hookrightarrow C^2\). In particular, proper base
change gives
\begin{equation}
\label{eq:curve-collision-bc}
g_2^*i_*\simeq\Delta_{Y,*}g_1^*.
\end{equation}

The Hecke correspondences also commute with this base change.
Put
\[
\mathrm{Hk}_{2,C}^X:=C^2\times_{\mathcal M_2}\mathrm{Hk}_2^X.
\]
Both squares in
\begin{equation}
\label{eq:curve-Hecke-bc}
\begin{tikzcd}[column sep=large,row sep=large]
Y_{C^2} \arrow[d,"g_2"']
  & \mathrm{Hk}_{2,C}^X
      \arrow[l,"h_{R,C}"']
      \arrow[r,"h_{L,C}"]
      \arrow[d,"g_{\mathrm{Hk}}"]
  & Y_{C^2} \arrow[d,"g_2"]\\
Y_2
  & \mathrm{Hk}_2^X
      \arrow[l,"h_R"]
      \arrow[r,"h_L"']
  & Y_2
\end{tikzcd}
\end{equation}
are Cartesian; the analogous statement holds for the one-point
and ordered convolution diagrams. All Hecke pushforwards below
are taken on the bounded Satake supports. Let \(K_{V,W,C}\) be
the pullback of \(K_{V,W}\) to
\(\mathrm{Hk}_{2,C}:=C^2\times_{\mathcal M_2}\mathrm{Hk}_2\), and put
\[
\mathcal F:=K_{V,W}\boxstar\delta_2,
\qquad
\mathcal F_C:=K_{V,W,C}\boxstar\delta_{C^2}.
\]
Proper base change in \eqref{eq:curve-Hecke-bc} identifies
\(g_2^*\mathcal F\simeq\mathcal F_C\).
By Lemmas~\ref{lem:fusion} and~\ref{lem:spreading}, the pullbacks
of the two ordered universal-local correspondences are the two
semi-local correspondences.

It remains to check the Gysin term. Put \(d=d_V+d_W\), and denote
the purity isomorphisms by
\[
\alpha_i:\delta_1\langle-2\rangle\xrightarrow{\sim}i^!\delta_2,
\qquad
\alpha_C:\delta_C\langle-2\rangle
  \xrightarrow{\sim}\Delta_Y^!\delta_{C^2}.
\]
Their compatibility is by
Lemma~\ref{lem:purity}. More explicitly, let
\(i_0:B\mathbb G_m\hookrightarrow[\mathbb A^1/\mathbb G_m]\) be the zero section.
The composite \(\rho\circ\gamma_C\) classifies the diagonal
\(\Delta_C\), by \eqref{eq:curve-square}.
Writing \(p_2:P_2\to\mathcal M_2\) for the structural morphism,
we therefore have a diagram with both squares Cartesian:
\[
\begin{tikzcd}[column sep=large,row sep=large]
P_C \arrow[r,"g_{P,1}"] \arrow[d,"\Delta_P"']
  & P_1 \arrow[r] \arrow[d,"i_P"']
  & B\mathbb G_m \arrow[d,"i_0"]\\
P_{C^2} \arrow[r,"g_{P,2}"']
  & P_2 \arrow[r,"\rho\circ p_2"']
  & {[\mathbb A^1/\mathbb G_m]}.
\end{tikzcd}
\]
Here \(\Delta_P\) is induced by \(\Delta_C\).

Their compatibility  gives a commutative diagram
\begin{equation}
\label{eq:curve-purity-compatibility}
\begin{tikzcd}[column sep=large,row sep=large]
g_1^*\delta_1\langle-2\rangle
  \arrow[r,"g_1^*\alpha_i"]
  \arrow[d,"\sim"']
  & g_1^*i^!\delta_2
      \arrow[d,"\eqref{eq:curve-collision-Y}","\sim"']\\
\delta_C\langle-2\rangle
  \arrow[r,"\alpha_C"']
  & \Delta_Y^!\delta_{C^2}.
\end{tikzcd}
\end{equation}

Let
\[
\epsilon_i:
i_*\delta_1\langle-2\rangle
\xrightarrow{i_*\alpha_i}
i_*i^!\delta_2
\xrightarrow{\mathrm{counit}}
\delta_2
\]
be the Gysin morphism, and define
\(\epsilon_C:\Delta_{Y,*}\delta_C\langle-2\rangle
\to\delta_{C^2}\) in the same way using \(\alpha_C\).
By \eqref{eq:curve-collision-bc},
\eqref{eq:curve-purity-compatibility}, and compatibility of the
exchange morphism with the counit, \(g_2^*\epsilon_i\) identifies
with \(\epsilon_C\).

Write \(\mathrm{Gys}_{\text{univ-local}}\) for the Gysin
operation occurring in \eqref{eq:universal}. For a morphism
\[
a:i^*\mathcal F\longrightarrow\delta_1\langle d-2\rangle,
\]
its definition in \eqref{eq:gysin} is the composite
\[
\begin{aligned}
\mathcal F
&\xrightarrow{\mathrm{unit}}i_*i^*\mathcal F\\
&\xrightarrow{i_*a}i_*\delta_1\langle d-2\rangle\\
&\xrightarrow{\epsilon_i\langle d\rangle}
\delta_2\langle d\rangle.
\end{aligned}
\]
Pulling back by \(g_2\), the preceding identifications transform
this composite into
\[
\begin{aligned}
\mathcal F_C
&\xrightarrow{\mathrm{unit}}
\Delta_{Y,*}\Delta_Y^*\mathcal F_C\\
&\xrightarrow{\Delta_{Y,*}(g_1^*a)}
\Delta_{Y,*}\delta_C\langle d-2\rangle\\
&\xrightarrow{\epsilon_C\langle d\rangle}
\delta_{C^2}\langle d\rangle.
\end{aligned}
\]
This is precisely the definition of
\(\mathrm{Gys}_{\text{semi-local}}(g_1^*a)\)
in \cite[equation (4.56)]{LW}. We have therefore proved
\begin{equation}
\label{eq:coev-curve-base-change}
g_2^*\mathrm{Gys}_{\text{univ-local}}(a)
=
\mathrm{Gys}_{\text{semi-local}}(g_1^*a).
\end{equation}

Under the collision identification
\(i^*\mathcal F\simeq\operatorname{IC}_{V\otimes W}*\delta_1\),
take \(a=c_{V\otimes W}^{\mathrm{loc}}\).
Its pullback \(g_1^*a\) is
\(c^{\mathrm{semi\text{-}loc}}_{V\otimes W,\{1\}}\).
Apply \(g_2^*\) to \eqref{eq:universal} and use
Lemma~\ref{lem:spreading} and
\eqref{eq:coev-curve-base-change}, and one has
\begin{equation}
\label{eq:semilocal}
c^{\mathrm{semi\text{-}loc}}_{V\boxtimes W,\{1,2\}}
-
c^{\mathrm{semi\text{-}loc}}_{W\boxtimes V,\{1,2\}}
=
\mathrm{Gys}_{\text{semi-local}}
\bigl(c^{\mathrm{semi\text{-}loc}}_{V\otimes W,\{1\}}\bigr).
\end{equation}
This proves \cite[Conjecture 4.55]{LW} for \(C\).

\subsection{From semi-local to global}
The deduction of Main Theorem~\ref{thm:main} (the global automorphic commutator relation, \cite[Conjecture 4.45]{LW}) from the semi-local commutator relation \eqref{eq:semilocal} (\cite[Conjecture 4.55]{LW}) is given in \cite[\S4.6.2, \S4.6.3]{LW} without additional assumptions, by applying the functorialities of cohomological correspondences and unwinding the definitions (see Definitions 4.40 and 4.41 and equations (4.49), (4.62) and (4.67) in \cite{LW}).

Therefore, this completes the proof of Main Theorem~\ref{thm:main}.

\newpage
\appendix
\section{Details of Generative AI Usage}
\label{app:ai}

Generative AI contributed to the mathematical argument and the preparation
of this article. The proof was initially produced by the agentic system Rethlas \cite{Rethlas} in about 1 hour, enhanced by the verification system
OSVerify \cite{osv}. Then the author verified the draft and rewrote it into a paper. Subsequent generative AI assistance was used to revise the \LaTeX{} presentation and improve the writing.

By contrast, Danus (running on GPT-6-Astra) \cite{liu2026danus} failed to solve it within 5 hours, and the GPT-6-Astra Max web interface failed to solve it in 3 attempts with the same prompt given in \S\ref{app:ai-prompt}.

For transparency, this appendix records the details of AI usage in proof generation and verification.

\subsection{Problem prompt}
\label{app:ai-prompt}

\begin{quote}\itshape
Prove Conjecture 4.45 Automorphic commutator relation in without
Assumption 4.46, of arXiv 2504.00275.
\end{quote}
The logs of the generation agent explicitly interpret this task as retaining
\cite[Setting 4.42]{LW}. 

\subsection{Experiment settings}
\label{app:ai-settings}

The experiment took place on September 22, 2026. The model identifiers
and reasoning-effort settings are listed below. Generation and review used the Codex CLI,
version \texttt{0.155.1}, with live web search enabled. The agent system Rethlas \cite{Rethlas} was adapted for OSVerify \cite{osv}, and the review backend was configured as a decomposing verifier.

\begin{center}
\small
\begin{tabularx}{\linewidth}{@{}lX@{}}
\toprule
Recorded item & Value \\
\midrule
Generator & \texttt{gpt-6-astra}, reasoning effort \texttt{max} \\
Reviewer & \texttt{gpt-6-astra}, reasoning effort \texttt{xhigh} \\
Generation agent & active time $\approx$ 36\,min\\
Verification agent & active time $\approx$ 25\,min\\
Total elapsed time & active time $\approx$ 61\,min\\
\bottomrule
\end{tabularx}
\end{center}

\subsection{Reasoning trajectory}
\label{app:ai-output}

The reasoning trajectory is reconstructed from retained memory entries and log files. We summarize it in Figure~\ref{fig:my_image}.

\begin{figure}[htbp]
    \centering
    \includegraphics[width=0.95\linewidth]{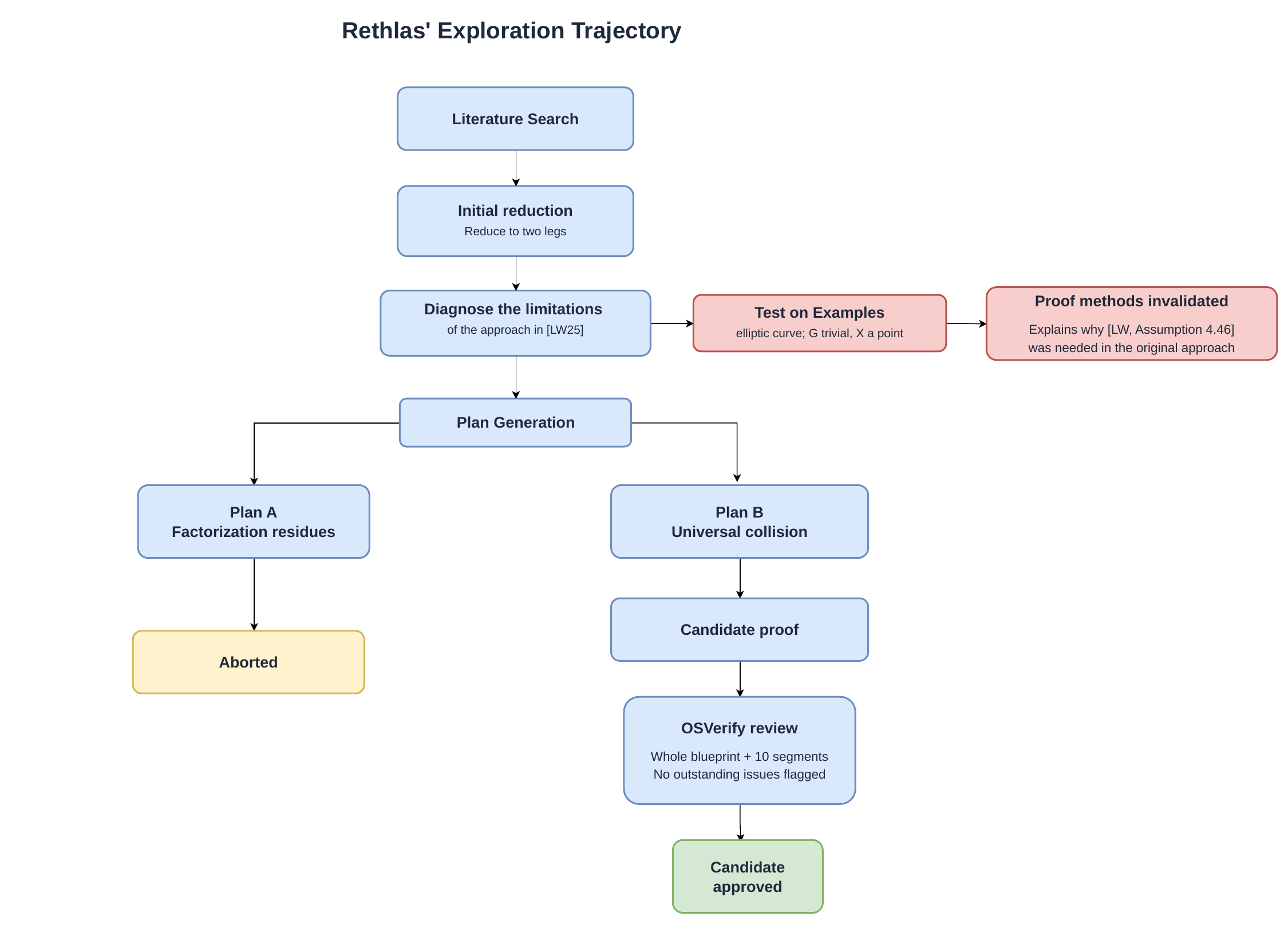}
    \caption{Rethlas--OSVerify reasoning trajectory}
    \label{fig:my_image}
\end{figure}

\begin{enumerate}[label=\textup{\arabic*.}]
\item \textbf{Initial reduction.}
The initial analysis retained \cite[Assumption 4.43]{LW} and reduced the problem to the case of two legs.
\item \textbf{Identifying the limitations of the original approach in \cite{LW}.}
It identified a gap in the naive argument that a correspondence is determined by its restrictions to the diagonal and its complement. As a test case, the record uses the nonzero diagonal
class on the square of an elliptic curve: it restricts to zero on the
complement, while its self-intersection is \(c_1(T_C)=0\). A second
test, with trivial \(G\) and \(X\) a point, showed why local
\(\hbar\)-injectivity need not survive specialization to a curve.
These tests invalidated the proposed proof methods and helped the agent understand why \cite[Assumption 4.46]{LW} was needed in the original approach of \cite{LW}.

\item \textbf{Investigating two approaches.}
Two internal branches investigated ``universal collision'' and ``factorization
residues.'' The first developed a space of
strictly marked formal multidisks. The second investigated whether a
factorization residue could supply a canonical divided commutator.
The record did not establish the comparison needed to apply the
higher-algebra results, and that direct
route was set aside.

\item \textbf{Constructing the universal collision.}
The selected approach constructed an explicit coordinate presentation
for \(\mathcal M_2\), identified its collision divisor with
\(B\Aut(D)\), and computed the normal line as the tangent character.
The resulting self-intersection class is \(\hbar\). The same branch
developed the regular-section condition and the pro-smooth presentations
used for purity.

\item \textbf{Constructing the morphisms and their descent.}
The agent separated descent of the Satake fusion comparison
from construction of the local correspondence maps. The former uses
intermediate extension at finite coordinate stages. The latter uses Beauville--Laszlo
patching and proper base change. They avoid inferring equivalence of derived morphisms solely from equality on classical coordinate charts.

\item \textbf{Assembling the proof.}
The final blueprint applies excision to the universal error term,
writes it as \(\mathrm{Gys}(u)\), and uses self-intersection to obtain
\(\hbar u=0\) and hence \(u=0\). Only after this step does the proof specialize to a curve, pass to the period sheaf,
and compose with the remaining legs. 
\item \textbf{OSVerify finally approved the candidate solution.} OSVerify reviewed the submitted blueprint as a whole and in ten additional segments, and did not flag outstanding issues.
\end{enumerate}
Then the author verified the generated proof, reorganized it into a paper and rewrote certain proofs for clearer exposition.

\clearpage
\printbibliography

@online{LW,
  author       = {Liu, Shurui and Wang, Zeyu},
  title        = {Higher period integrals and derivatives of {$L$}-functions},
  year         = {2026},
  eprinttype   = {arXiv},
  eprint       = {2504.00275v3},
  eprintclass  = {math.NT},
  note         = {Version 3},
}

@online{Nafcha,
  author       = {Nafcha, Elchanan},
  title        = {Nodal degeneration of chiral algebras {I}: global structure and gluing formula},
  year         = {2026},
  eprinttype   = {arXiv},
  eprint       = {2603.30037v1},
  eprintclass  = {math.AG},
  note         = {Version 1},
}

@incollection{Zhu,
  author       = {Zhu, Xinwen},
  title        = {An introduction to affine {Grassmannians} and the geometric {Satake} equivalence},
  booktitle    = {Geometry of moduli spaces and representation theory},
  editor       = {Bezrukavnikov, Roman and Braverman, Alexander and Yun, Zhiwei},
  series       = {IAS/Park City Mathematics Series},
  volume       = {24},
  publisher    = {American Mathematical Society},
  location     = {Providence, RI},
  year         = {2017},
  pages        = {59--154},
  eprinttype   = {arXiv},
  eprint       = {1603.05593v2},
  eprintclass  = {math.AG},
  note         = {Numbered references use arXiv version 2 (2016)},
}

@online{Rethlas,
  author       = {Ju, Haocheng and Gao, Guoxiong and Jiang, Jiedong and Wu, Bin and Sun, Zeming and Liu, Shurui and Chen, Leheng and Wang, Yutong and Wang, Yuefeng and Wang, Zichen and He, Wanyi and Wu, Peihao and Xiao, Liang and Liu, Ruochuan and Dai, Bryan and Dong, Bin},
  title        = {Automated conjecture resolution with formal verification},
  year         = {2026},
  eprinttype   = {arXiv},
  eprint       = {2604.03789},
  eprintclass  = {cs.LG},
}

@online{liu2026danus,
  author       = {Liu, Jihao and Gao, Guoxiong and Sun, Zeming and Wu, Bin and Liu, Shurui and Jiang, Jiedong and Ju, Haocheng and Chen, Leheng and Cheng, Ronnie and Zhang, Xiping and Dong, Bin},
  title        = {{Danus}: orchestrating mathematical reasoning agents with fact-graph memory},
  year         = {2026},
  eprinttype   = {arXiv},
  eprint       = {2607.06447},
}

@misc{osv,
  author       = {Nie, Fan and Ye, Haotian and Liu, Shurui and Wang, Zihao and Ermon, Stefano and Zou, James},
  title        = {One-sided verification under limited expert review},
  year         = {2026},
  note         = {in submission},
}

@misc{propertiesplan,
  title        = {On properties of {Plancherel} algebras},
  howpublished = {Hexagon preprint 2609.00128v1},
  year         = {2026},
  note         = {Contributors: Shurui Liu; \url{https://hexagonmath.org/2609.00128v1}},
}

@online{benzvi2024relativelanglandsduality,
  author       = {Ben-Zvi, David and Sakellaridis, Yiannis and Venkatesh, Akshay},
  title        = {Relative {Langlands} duality},
  year         = {2024},
  eprinttype   = {arXiv},
  eprint       = {2409.04677},
  eprintclass  = {math.RT},
}

@online{arinkin2022stacklocalsystemsrestricted,
  author       = {Arinkin, D. and Gaitsgory, D. and Kazhdan, D. and Raskin, S. and Rozenblyum, N. and Varshavsky, Y.},
  title        = {The stack of local systems with restricted variation and geometric {Langlands} theory with nilpotent singular support},
  year         = {2022},
  eprinttype   = {arXiv},
  eprint       = {2010.01906},
  eprintclass  = {math.AG},
}

@article{pappas2008twisted,
  author       = {Pappas, Georgios and Rapoport, Michael},
  title        = {Twisted loop groups and their affine flag varieties},
  journal      = {Advances in Mathematics},
  volume       = {219},
  number       = {1},
  pages        = {118--198},
  year         = {2008},
  note         = {With an appendix by T. Haines and Rapoport},
}

@online{wang2025specialcycleshtukascategorical,
  author       = {Wang, Zeyu},
  title        = {Special cycle on {Shtukas} and categorical trace},
  year         = {2025},
  eprinttype   = {arXiv},
  eprint       = {2509.05526},
  eprintclass  = {math.NT},
}

@online{wang2026diagonalcyclesshtukasadjoint,
  author       = {Wang, Zeyu},
  title        = {Diagonal cycles on {Shtukas} and the adjoint {$L$}-function},
  year         = {2026},
  eprinttype   = {arXiv},
  eprint       = {2607.25237},
  eprintclass  = {math.NT},
}

@online{wang2026arithmetic,
  author       = {Wang, Zeyu and Wei, Wenqing},
  title        = {Arithmetic volume of {Shtukas} and {Langlands} duality},
  year         = {2026},
  eprinttype   = {arXiv},
  eprint       = {2604.03763},
}

@article{yun2017shtukas,
  author       = {Yun, Zhiwei and Zhang, Wei},
  title        = {Shtukas and the {Taylor} expansion of {$L$}-functions},
  journal      = {Annals of Mathematics (2)},
  volume       = {186},
  number       = {3},
  pages        = {767--911},
  year         = {2017},
}

@article{yun2019shtukas,
  author       = {Yun, Zhiwei and Zhang, Wei},
  title        = {Shtukas and the {Taylor} expansion of {$L$}-functions ({II})},
  journal      = {Annals of Mathematics (2)},
  volume       = {189},
  number       = {2},
  pages        = {393--526},
  year         = {2019},
}

@online{feng2026arithmetic,
  author       = {Feng, Tony and Yun, Zhiwei and Zhang, Wei},
  title        = {Arithmetic volumes of moduli stacks of {Shtukas}},
  year         = {2026},
  eprinttype   = {arXiv},
  eprint       = {2601.18557},
}

@article{feng2024higher,
  author       = {Feng, Tony and Yun, Zhiwei and Zhang, Wei},
  title        = {Higher {Siegel--Weil} formula for unitary groups: the non-singular terms},
  journal      = {Inventiones Mathematicae},
  volume       = {235},
  number       = {2},
  pages        = {569--668},
  year         = {2024},
}

@online{feng2025higher,
  author       = {Feng, Tony and Howard, Benjamin and Mkrtchyan, Mikayel},
  title        = {Higher {Siegel--Weil} formula for unitary groups {II}: corank one terms},
  year         = {2025},
  eprinttype   = {arXiv},
  eprint       = {2507.13473},
}

@online{feng2021higher,
  author       = {Feng, Tony and Yun, Zhiwei and Zhang, Wei},
  title        = {Higher theta series for unitary groups over function fields},
  year         = {2021},
  eprinttype   = {arXiv},
  eprint       = {2110.07001},
}

@online{feng2023modularity,
  author       = {Feng, Tony and Yun, Zhiwei and Zhang, Wei},
  title        = {Modularity of higher theta series {I}: cohomology of the generic fiber},
  year         = {2023},
  eprinttype   = {arXiv},
  eprint       = {2308.10979},
}

@online{feng2024modularity,
  author       = {Feng, Tony and Khan, Adeel A.},
  title        = {Modularity of higher theta series {II}: {Chow} group of the generic fiber},
  year         = {2024},
  eprinttype   = {arXiv},
  eprint       = {2403.19711},
}

@online{feng2026modularity,
  author       = {Feng, Tony and Yun, Zhiwei and Zhang, Wei},
  title        = {Modularity of higher theta series {III}: proof of the modularity conjecture},
  year         = {2026},
  eprinttype   = {arXiv},
  eprint       = {2608.07173},
}

@article{feng2026eigenweights,
  author       = {Feng, Tony},
  title        = {Eigenweights for arithmetic {Hirzebruch} proportionality},
  journal      = {PNAS Nexus},
  volume       = {5},
  number       = {5},
  pages        = {pgag143},
  year         = {2026},
}

\end{document}